\documentclass[12pt]{amsart}
\usepackage{amssymb}
\usepackage{amsmath}
\usepackage{setspace}
\usepackage{mathtools}
\usepackage{verbatim}
\usepackage{lipsum}
\usepackage{amsthm}
\usepackage{tikz-cd}
\usepackage{hyperref}
\usepackage{adjustbox}

\newtheorem{theorem}{Theorem}[section]
\newtheorem{lemma}[theorem]{Lemma}
\newtheorem{definition-lemma}[theorem]{Definition-Lemma}

\newtheorem{remark}[theorem]{Remark}

\newtheorem{claim}{Claim}

\newcommand{\bb}[1]{\mathbb{#1}}

\newcommand{\cal}[1]{\mathcal{#1}}

\newcommand{\Spec}[1]{\operatorname{Spec} \, #1}

\newcommand{\sing}[1]{\operatorname{Sing} \, #1}
\newcommand{\supp}[1]{\operatorname{Supp} \, #1}
\newcommand{\exc}[1]{\operatorname{Exc} \, #1}

\newcommand{\Null}[1]{\operatorname{Null} \, #1}

\title{Foliated minimal models and flops}

\author{Paolo Cascini}

\author{Roktim Mascharak}
\author{Calum Spicer}

\address{Department of Mathematics, Imperial College London, 180 Queen's
Gate, London SW7 2AZ, UK}
\email{p.cascini@ic.ac.uk}

\address{Department of Mathematics, King's College London, Strand,
London WC2R 2LS, UK}
\email{roktim.mascharak@kcl.ac.uk}

\address{Department of Mathematics, King's College London, Strand,
London WC2R 2LS, UK}
\email{calum.spicer@kcl.ac.uk}

\begin{document}

\begin{abstract}
We study minimal models and flops for foliations. We show that if 
$\mathcal F$ is a rank one foliation with canonical singularities on a 
normal projective $\mathbb Q$-factorial variety and $K_{\mathcal F}$ is 
pseudo-effective, then any two outputs of the $K_{\mathcal F}$-MMP are 
isomorphic.

For co-rank one foliations on threefolds, we prove existence results for 
$D$-flops in the klt setting and, under additional hypotheses, in the 
F-dlt setting. By contrast, we construct examples showing that rank one 
foliations display pathologies absent from the classical MMP: flopping 
contractions need not admit $D$-flops, and nef and big canonical divisors 
need not give rise to canonical models, even in the category of algebraic 
spaces.
\end{abstract}

\maketitle

\tableofcontents

\section{Introduction}

A fundamental principle in the birational geometry of varieties is that
minimal models are unique up to flops.  More precisely, Kawamata proved
that two birational minimal models of the same pair are connected by a
finite sequence of flops \cite{Kawamata08}.  Thus, although minimal
models are not unique in general, their non-uniqueness is controlled by
crepant birational transformations which are isomorphisms in codimension
one.

The aim of this paper is to study the analogous question for foliations.
The minimal model programme for foliations is now known in several
important cases, especially for rank one foliations and for co-rank one
foliations on threefolds; see for example \cite{spicer20,CS21,CS20}.  Once
existence of minimal models is known, it is natural to ask whether the
outputs of the foliated MMP are again connected by flops.  In the
co-rank one case this question has been studied by Jiao--Voegtli
\cite{JV23}, who prove that two minimal models descending from a common
threefold pair endowed with an F-dlt co-rank one foliation of general type are connected
by a sequence of flops.  Related developments in the foliated MMP and in
the study of flops for foliations appear in 
\cite{ChaudhuriMascharak24,ChenLiuWang25}.

\medskip

In this paper, we are interested in the birational geometry of foliations from two
complementary points of view.  First, we show that rank one foliations
behave in a markedly different way: their minimal
models are much more rigid, while their flopping contractions may exhibit
pathologies which do not occur in the classical MMP.  Secondly, we prove
existence results for flops of co-rank one foliations on threefolds.

\medskip 

We begin with rank one foliations, which admit a strong uniqueness
statement for minimal models:

\begin{theorem}\label{t_uniqueness_min_models}
Let $X$ be a normal projective variety with $\mathbb Q$-factorial
singularities, and let $\mathcal F$ be a rank one foliation on $X$ with
canonical singularities such that $K_{\mathcal F}$ is pseudo-effective.
Let
\[
    f_1\colon X\dashrightarrow Y_1
    \qquad\text{and}\qquad
    f_2\colon X\dashrightarrow Y_2
\]
be two outputs of a $K_{\mathcal F}$-MMP.  

Then there exists an
isomorphism
\(
    \phi\colon Y_1\to Y_2
\)
such that $f_2=\phi\circ f_1$.
\end{theorem}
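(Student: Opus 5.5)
The strategy is to show that the birational map $\psi = f_2\circ f_1^{-1}\colon Y_1 \dashrightarrow Y_2$ is an isomorphism in codimension one, and then to exploit the rigidity of rank one foliations: on a minimal model, the MMP contracts exactly the divisors and curves that are ``negative'' for $K_{\mathcal F}$. I will write $\mathcal G_i$ for the induced foliation on $Y_i$. Each $\mathcal G_i$ has canonical singularities and $K_{\mathcal G_i}$ is nef.

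\textbf{Step 1: the negativity lemma and crepancy.} Take a common resolution $p\colon W\to X$ with $q_i = f_i\circ p\colon W\to Y_i$. Since each $f_i$ is a $K_{\mathcal F}$-MMP, the standard negativity argument gives
\[
p^*K_{\mathcal F} = q_i^*K_{\mathcal G_i} + E_i ,
\]
where $E_i\ge 0$ and $\operatorname{Supp} E_i$ contains the strict transforms of all $f_i$-exceptional divisors. Applying the negativity lemma to $q_1^*K_{\mathcal G_1} - q_2^*K_{\mathcal G_2} = E_2 - E_1$, which is nef over $Y_2$ in one direction and over $Y_1$ in the other, yields $E_1 = E_2$ and $q_1^*K_{\mathcal G_1} = q_2^*K_{\mathcal G_2}$. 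As a consequence, $f_1$ and $f_2$ contract the same divisors of $X$. Hence $\psi$ is an isomorphism in codimension one and is $K$-crepant.

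\textbf{Step 2: excluding flops.} This is the main obstacle, and it is where the rank one hypothesis must enter. Suppose $\psi$ is not an isomorphism. Pick an ample divisor $A_2$ on $Y_2$ and let $A_1 = \psi^{-1}_*A_2$. Then $A_1$ is not nef; otherwise $\psi$ would be an isomorphism, since $Y_1$ and $Y_2$ are $\mathbb Q$-factorial and isomorphic in codimension one. So there is a curve $C\subset Y_1$ lying in the indeterminacy locus of $\psi$ with $A_1\cdot C<0$, and by crepancy $K_{\mathcal G_1}\cdot C = 0$.

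I would then invoke the structure theory of rank one MMP (from \cite{CS20}). On a rank one foliation with canonical singularities, flipping and divisorial contractions occur only along curves tangent to the foliation. Moreover, the exceptional loci of the steps $X\dashrightarrow Y_i$ are $\mathcal F$-invariant curves $C$ with $K_{\mathcal F}\cdot C<0$, which are rational and pass through singular points. The plan is to show that the indeterminacy locus of $\psi$ is contained in the union of the images of the flipped loci of the two MMPs. Such flipped curves, on the flipped side, are exactly curves with $K\cdot C>0$, or are non-invariant. For a tangent curve with $K_{\mathcal G_1}\cdot C = 0$ passing through canonical singularities, I would argue that it cannot arise as the image of a flipped locus. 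The argument would compare discrepancies along $C$ computed on both sides. Since $E_1 = E_2$, the discrepancies of the divisors over $C$ coincide for $Y_1$ and $Y_2$. Using that flips strictly increase discrepancies of divisors centred in the flipped locus, any divisor over $C$ would have to have strictly bigger discrepancy on one side, which is a contradiction.

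\textbf{Step 3: conclusion.} Combining Steps 1 and 2, every divisor has the same discrepancy with respect to $(Y_1,\mathcal G_1)$ and $(Y_2,\mathcal G_2)$. Moreover, any curve contracted by $q_1$ but not by $q_2$ would yield a divisor of $W$ whose centre on $Y_1$ lies in a flipped locus of one of the MMPs but not of the other, which contradicts the strict increase of discrepancies from Step 2. Hence $q_1$ and $q_2$ contract the same curves, and by rigidity (Zariski's main theorem) $\psi$ extends to an isomorphism $\phi$ with $f_2 = \phi\circ f_1$. The delicate point I expect is making precise that the set of divisors with strictly increased discrepancy determines the flipped locus for rank one foliations. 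This uses that in the rank one setting the flipping curves are isolated invariant curves, so their transforms are detected by discrepancies of divisors over them.
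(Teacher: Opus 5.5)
There is a genuine gap. Your Step 1 is correct and matches how the paper begins: the negativity lemma gives $q_1^*K_{\mathcal G_1}=q_2^*K_{\mathcal G_2}$, so $\psi$ is small. The problem is in Steps 2--3, where the discrepancy comparison cannot produce a contradiction.

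Since $E_1=E_2$, you have $a(\Gamma,\mathcal G_1)=a(\Gamma,\mathcal G_2)$ for \emph{every} divisor $\Gamma$ over $X$. When the centre of $\Gamma$ lies in the non-isomorphism locus, both numbers exceed $a(\Gamma,\mathcal F)$, but by the same amount. So no divisor ever has ``strictly bigger discrepancy on one side''. All this bookkeeping shows is that $f_1$ and $f_2$ have the same non-isomorphism locus on $X$, which restates crepancy.

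Such an argument cannot detect a small crepant map. Let $Y_1\dashrightarrow Y_2$ be an Atiyah flop of Calabi--Yau threefolds and let $X$ be the blow-up of the flopping curve. Then both $Y_i$ are outputs of $K_X$-MMPs, and every discrepancy statement you invoke holds, yet $\psi$ is not an isomorphism. The same example refutes your Step 3 claim: there are curves contracted by $q_1$ but not by $q_2$, while the only divisor involved is centred in the flipped locus of both MMPs. Moreover, the curve $C$ with $A_1\cdot C<0$ chosen in Step 2 has no reason to be tangent to $\mathcal G_1$. So the rank one information you quote about flipping curves is never actually used.

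The missing idea is a mechanism that forces some curve of $\operatorname{Exc}\psi$ to be $\mathcal G_1$-invariant. The valid part of your discrepancy idea shows that no lc centre of $\mathcal G_1$ lies in $\operatorname{Exc}\psi$. Indeed, if $a(\Gamma,\mathcal G_1)=0$ then $a(\Gamma,\mathcal F)=0$ since $\mathcal F$ is canonical, so both $f_i$ are isomorphisms at the generic point of the centre. An invariant curve there would be such an lc centre, so the task reduces to producing one.

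The paper does this in four steps:
\begin{enumerate}
\item Choose the ample $A_2$ using the Bertini-type Theorem~\ref{t_Bertini}, proved via foliated jets, so that it contains no invariant subvariety outside $\operatorname{Sing}^+$.
\item After possibly swapping $Y_1$ and $Y_2$, $K_{\mathcal G_1}+tA_1$ is not nef for $0<t\ll 1$, where $A_1=\psi^{-1}_*A_2$. If the analogous divisors on both sides were nef, negativity plus rigidity would make $\psi$ an isomorphism.
\item Since $\mathcal G_1$ is terminal along $\operatorname{Exc}\psi$, it is locally a submersion after a quasi-\'etale cover, and no leaf lies in $A_1$. Hence $(\mathcal G_1,\epsilon A_1)$ is lc along $\operatorname{Exc}\psi$.
\item The foliated cone theorem gives a $(K_{\mathcal G_1}+\epsilon A_1)$-negative curve that is either non-lc at its general point or invariant. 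Since $A_1$ is negative on it, negativity places it in $\operatorname{Exc}\psi$. It is therefore invariant, hence an lc centre of $\mathcal G_1$ inside $\operatorname{Exc}\psi$, which is the contradiction.
\end{enumerate}
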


Thus, for rank one foliations with canonical singularities, minimal
models are unique as outputs of the MMP.  A technical ingredient in the proof of Theorem
\ref{t_uniqueness_min_models} is a Bertini-type statement for invariant
subvarieties.  In its simplest form, it says that for a smooth rank one
foliation, one can choose a sufficiently ample divisor which contains no
invariant subvariety.  We prove a more general version which applies to
singular rank one foliations (cf. Theorem \ref{t_Bertini}). More specifically, 
if $X$ is a normal projective variety of dimension $n$, $\mathcal F$ is a
rank one foliation with canonical singularities, and $L$ is ample, then,
after replacing $L$ by a multiple, there is $A\in |L|$ which contains no
log canonical centre of $\mathcal F$ not contained in
$\operatorname{Sing}^+\mathcal F$.  The proof uses foliated jets: for a
general section $s\in H^0(X,L)$, its foliated $n$-jet
\(j^n_{\mathcal F}(s)\)
is nowhere vanishing on the locus where the foliation is regular and
Gorenstein.  If an invariant subvariety were contained in $\{s=0\}$,
then all the derivatives of $s$ along the foliation would vanish along that
subvariety, contradicting the choice of $s$.

\medskip

We then turn to co-rank one foliations.  Our main existence theorem is
the following:
\begin{theorem}
\label{thm_dlt_flop}
Let $X$ be a smooth threefold and let $\cal F$ 
be an F-dlt foliation of co-rank one on $X$. 
Let
$f\colon X \rightarrow Z$ be a $K_{\mathcal F}$-flopping contraction and let  $D$ be a $\mathbb Q$-Cartier divisor on $X$ such that 
$-D$ is $f$-ample.   Let $S_1, \dots, S_N \subset \hat{X}$ denote the collection of all the separatrices of $\mathcal F$ (formal or otherwise) which meet $C:= \exc f$, where $\hat{X}$ is the formal completion of $X$ along $C$.
Suppose that 

\begin{enumerate}
\item $C$ is irreducible; 

\item $C \subset S_1$ and $S_1$ is normal (in particular $C$ is tangent to $\mathcal F)$; and

\item  $-S_1$ is $f$-ample.
\end{enumerate}

Then the $D$-flop exists (see \S \ref{s_notation}).
\end{theorem}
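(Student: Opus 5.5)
The strategy is to reduce the existence of the $D$-flop to the existence of a flip for a pair whose boundary is supported on the separatrix $S_1$, and then invoke the classical threefold MMP. The statement is local over $Z$, so we may shrink $Z$ around $f(C)$ and work on the formal completion $\hat X$ whenever convenient. The construction is then algebraized using Artin's results, since the flop is determined by its formal neighbourhood.

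\textbf{Step 1: the divisor $S_1$ is algebraic near $C$ and $K_X + S_1$ is relatively comparable to $D$.} Since $C$ is irreducible and $-S_1$ is $f$-ample, the relative Picard number of $f$ over a neighbourhood of $f(C)$ is effectively one along $C$. More precisely, $\rho(\hat X/\hat Z)=1$ after possibly shrinking, because $X$ is smooth and $C$ is an irreducible curve, so a divisor is determined numerically by its degree on $C$. Hence $D\equiv_f \lambda S_1$ for some $\lambda>0$. Similarly $K_{\mathcal F}\equiv_f 0$. The $D$-flop is therefore the same as the $(-S_1)$-flop, that is, the flip for $S_1$, which is relatively positive on the other side. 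By Artin approximation/algebraization we may view $S_1$ as a divisor on an \'etale neighbourhood of $f(C)$. Alternatively, the flop is given by $\operatorname{Proj}_Z \bigoplus_m f_*\mathcal O_X(mS_1)$ computed formally, and finite generation can be checked on $\hat Z$.

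\textbf{Step 2: adjunction along the separatrix.} Because $\mathcal F$ is F-dlt and $S_1$ is an invariant normal surface containing $C$, foliated adjunction gives $(K_{\mathcal F}+S_1)|_{S_1}$ in terms of the restricted foliation on $S_1$. The key point is that $\mathcal F$ is F-dlt, hence log canonical, and along the separatrices one has $K_{\mathcal F}\sim K_X+\sum S_i$ near $C$ up to terms not meeting $C$. This is the standard comparison of $K_{\mathcal F}$ with $K_X+\Delta$ where $\Delta$ is the sum of separatrices, valid for F-dlt foliations on smooth threefolds, formally along $C$. From $K_{\mathcal F}\cdot C=0$ we obtain $(K_X+S_1+\sum_{i\ge2}S_i)\cdot C=0$. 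The pair $(X,\sum S_i)$ is log canonical near $C$, and in fact dlt, using F-dlt and the normality of $S_1$.

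\textbf{Step 3: run a flip for a klt pair.} Take $\Theta=(1-\epsilon)S_1+\sum_{i\ge 2}S_i$ with small $\epsilon>0$, perturbing the $S_i$ for $i\ge2$ slightly if necessary to get klt. Then $(K_X+\Theta)\cdot C=-\epsilon S_1\cdot C>0$ is the wrong sign. Instead we need a pair for which $C$ is negative. For this, use $\Theta'=S_1+(1-\delta)\sum_{i\ge2}S_i$, so that $(K_X+\Theta')\cdot C=-\delta\sum_{i\ge 2}S_i\cdot C$. Since $C\subset S_1$, the curves $S_i$ for $i\ge2$ meet $C$ transversally in points. Hence $\sum_{i\ge2}S_i\cdot C\ge 0$, with equality only if no other separatrix meets $C$. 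Comparing with $-S_1\cdot C>0$: since $\rho=1$ the signs must be consistent. If $\sum_{i\ge2}S_i\cdot C>0$, then $C$ is a $(K_X+\Theta')$-negative extremal ray of a dlt pair, and the threefold flip exists by Shokurov/Mori. This flip is exactly the $S_1$-flip, hence the $D$-flop. In the remaining case $\sum_{i\ge 2}S_i\cdot C=0$, we have $(K_X+S_1)\cdot C=0$. We then use $(K_X+S_1+\eta H)$ with $H$ a general ample divisor on the flop side, or more simply apply dlt adjunction, $(K_X+S_1)|_{S_1}=K_{S_1}+\mathrm{Diff}$. Since $S_1|_{S_1}\cdot C<0$, the curve $C$ is contractible on the normal surface $S_1$. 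The flip of $S_1$ is then the flip of the plt pair $(X,S_1+\Delta')$ for a small $\Delta'$, $f$-ample in $-S_1$ direction, again existing by the special (pl) flip theorem.

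\textbf{Expected main obstacle.} The main difficulty is Step 2: establishing, formally along $C$, that $K_{\mathcal F}$ agrees with $K_X+\sum S_i$ and that the resulting pair has dlt/plt singularities along $C$. This is where normality of $S_1$ and the F-dlt hypothesis are used, and where formal (non-algebraic) separatrices force us to work on $\hat X$ and then algebraize the resulting finitely generated algebra.
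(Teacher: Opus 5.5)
Your proposal breaks down in Step~3, where you assert that because $C\subset S_1$, the other separatrices $S_i$ ($i\ge 2$) meet $C$ transversally in points, so that $\sum_{i\ge 2}S_i\cdot C\ge 0$. This is false exactly in the difficult case. If $C\subset\operatorname{Sing}\mathcal F$, then, since $\mathcal F$ is F-dlt, it has simple singularities at the general point of $C$, with two local separatrices both containing $C$. Because $S_1$ is normal, the second of these is a different $S_i$, $i\ge 2$, which contains $C$, and its degree on $C$ can be negative. So your dichotomy ``$\sum_{i\ge 2}S_i\cdot C>0$ or $=0$'' is not exhaustive. In the missing case you only have $(K_X+S_1)\cdot C\le -\sum_{i\ge 2}S_i\cdot C$, which may be positive. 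Then neither $(X,\Theta')$ nor a plt pair $(X,S_1+\Delta')$ gives a log flipping contraction. Even when a separatrix meeting $C$ properly exists, lowering the coefficients of \emph{all} $S_i$, $i\ge 2$, can fail for the same reason; you must lower only the coefficient of one that is positive on $C$. Your computation with $\Theta$ already shows this is the only available mechanism: starting from the lc pair $(\hat X,\sum S_i)$ with $(K_{\hat X}+\sum S_i)\cdot C\le 0$, you can only make the log canonical divisor negative on $C$ by decreasing the coefficient of something positive on $C$. Nothing in your argument guarantees such a separatrix exists.

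That existence statement is the heart of the paper's proof, and it is where hypotheses (2) and (3) really enter. First, $\operatorname{Sing}\mathcal F\cap C\neq\emptyset$: otherwise $S_1$ is the only separatrix meeting $C$, and Camacho--Sad gives $S_1|_{S_1}\equiv 0$, contradicting $f$-ampleness of $-S_1$. If $\operatorname{Sing}\mathcal F\cap S_1$ has a component $\Sigma\ne C$, a local analysis at a simple singular point of $\Sigma\cap C$, using normality of $S_1$, produces a separatrix meeting $C$ properly. Otherwise $\operatorname{Sing}\mathcal F=C$ near $C$ and there are exactly two separatrices $S_1,S_2$, both containing $C$; this case is ruled out in two ways.
\begin{enumerate}
\item If the transverse type is a saddle-node with $S_1$ weak, then $K_{\mathcal F}|_{S_1}=K_{S_1}+\alpha C$ with $\alpha>1$, while $(K_X+S_1+S_2)|_{S_1}=K_{S_1}+C$. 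Lemma~\ref{lem_nef_comparison_lem} then gives $(\alpha-1)C^2\ge 0$ on $S_1$, which is impossible for a contractible curve.
\item Otherwise $S_1|_{S_1}\equiv aC$ with $a$ a Camacho--Sad index, which by Brunella is not in $\mathbb Q_{>0}$. But $C$ contractible and $-S_1|_{S_1}$ $f$-ample force $a\in\mathbb Q_{>0}$.
\end{enumerate}
With such an $S_2$ in hand, the paper takes the klt boundary $(1-\epsilon)(S-\eta S_2)$ with $0<\epsilon\ll\eta\ll 1$. It approximates this by an algebraic klt $\tilde\Delta$ with $-(K_X+\tilde\Delta)$ $f$-ample and concludes by BCHM.

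Smaller points. The linear equivalence $K_{\mathcal F}\sim K_X+\sum S_i$ in Step~2 is false in general; along the weak separatrix of a saddle-node the two restrictions differ as above. Only the inequality of Lemma~\ref{lem_nef_comparison_lem} holds, though it suffices for the sign you need. Your dlt claim for $(X,\sum S_i)$ is unjustified, and boundaries with a possibly formal component of coefficient one are hard to algebraize. A klt boundary with $f$-anti-ample log canonical divisor survives approximation. The resulting Fano-type structure is also what justifies replacing $D$ by the numerically equivalent $\lambda S_1$, which is not automatic (cf.\ Remark~\ref{dlt_flop_nonexist}).
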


The proof of Theorem \ref{thm_dlt_flop} uses the existence of  separatrices of
non-dicritical
co-rank one foliations and a reduction to the construction of a klt
ample model.  The idea is that by working formally along the exceptional
locus one can use the separatrices of the foliation to produce a boundary whose log MMP produces the desired
$D$-ample model.  This is similar to how the flip is constructed in \cite{CS21}. However, in the case that the flopping curve is contained in the singular locus of the foliation, the techniques of \cite{CS21} do not immediately imply the existence of the flop.  In this situation a more refined understanding of the dynamics of the foliation near the flopping curve is required: by analysing the Camacho-Sad indices around the flopping curve, we are able to demonstrate the existence of a separatrix which meets the flopping curve properly, and this extra separatrix provides the additional flexibility needed to make the techniques of \cite{CS21} apply here.

We remark that Theorem \ref{thm_dlt_flop} is also surprising because we are able to prove a stronger statement than would be expected from the setting of pairs.  Indeed, flops do not necessarily exist for dlt pairs (we refer to Remark \ref{dlt_flop_nonexist} for a further comment on this point).

Using similar ideas we show the existence of flops for klt foliations.

\begin{theorem}
\label{thm_flop_klt}
Let $X$ be a klt quasi-projective threefold, let $\mathcal F$ be a foliation of co-rank one on $X$ and let $\Delta \ge 0$ be a $\mathbb Q$-divisor such that 
$(\mathcal F, \Delta)$ is klt. 
Let $f\colon X \rightarrow Z$ be a small contraction such that $K_{\mathcal F}+\Delta$ is $f$-numerically trivial. 
Let $D \ge 0$ be a $\mathbb Q$-Cartier divisor such that $-D$ is $f$-ample.

Then the $D$-flop exists.
\end{theorem}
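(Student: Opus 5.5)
The plan is to reduce the existence of the $D$-flop to running a classical log MMP for a suitable klt pair on $X$ over $Z$. The relevant boundary will be built from the foliation, and the point is to get a klt pair $(X,\Theta)$ with $K_X+\Theta \equiv_f -\epsilon D$, or more precisely $\equiv_f$ a positive multiple of $D$ up to sign conventions. Recall that the $D$-flop is the ample model of $D$ over $Z$, that is, $X^+=\operatorname{Proj}_Z \bigoplus_m f_*\mathcal O_X(mD)$. So it suffices to show that this finitely generated sheaf of algebras yields a small birational map, and the key tool for finite generation is the log MMP for klt pairs in dimension three.

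\textbf{Step 1: localize and use the comparison between $K_{\mathcal F}$ and $K_X$.}
Since $f$ is small and the statement is local over $Z$, we may shrink $Z$ to an affine neighbourhood of the image of $\exc f$. For klt co-rank one foliated pairs on threefolds, the results of \cite{CS21,spicer20} give the following. Passing to an F-dlt modification if needed, the foliation is non-dicritical, and every component of $\exc f$ is tangent to $\mathcal F$. Indeed, a curve transverse to $\mathcal F$ would satisfy $(K_{\mathcal F}+\Delta)\cdot C>0$ by the foliated adjunction/subadjunction for transverse curves, which contradicts $f$-numerical triviality.

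Next I invoke the key klt input from \cite{CS21}: there exists a $\mathbb Q$-divisor $\Gamma\ge0$, built from the (formal) separatrices through $\exc f$, such that $(X,\Delta+\Gamma)$ is klt near $\exc f$ and $K_X+\Delta+\Gamma\equiv_f K_{\mathcal F}+\Delta$. In particular $K_X+\Delta+\Gamma\equiv_f 0$. If the separatrices are only formal, I would work on the formal completion $\hat X$ along $\exc f$, as in the setting of Theorem \ref{thm_dlt_flop}. The MMP and finite generation statements needed are relative over $Z$ and local along the exceptional locus, so they can be run formally and then algebraized, following how flips are constructed in \cite{CS21}.

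\textbf{Step 2: perturb by $D$.}
Set $\Theta=\Delta+\Gamma+\delta D$ for $0<\delta\ll1$. Since $D\ge0$ and klt is an open condition, $(X,\Theta)$ is klt for small $\delta$. Moreover $K_X+\Theta\equiv_f \delta D$. Because $-D$ is $f$-ample, $X\to Z$ is the relative ample model of $-(K_X+\Theta)$. We now run a $(K_X+\Theta)$-MMP over $Z$. Every divisor on $X$ is big over $Z$, since $f$ is birational, and $X$ is klt, so after $\mathbb Q$-factorialization this MMP terminates with a good minimal model by the three-dimensional klt MMP. Its relative ample model is $X^+=\operatorname{Proj}_Z\bigoplus_m f_*\mathcal O_X(mD)$.

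This model is small over $Z$. The first step contracts $\exc f$ itself, since $f$ is already the contraction of the extremal face and $K_X+\Theta$ is $f$-anti-ample. No divisor is contracted because $f$ is small. Hence the flip of $f$ with respect to $K_X+\Theta$ exists, and it is precisely the $D$-flop. Finally, $K_{\mathcal F^+}+\Delta^+$ is numerically trivial over $Z$ because it is the pullback of its pushforward, $f$ being small.

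\textbf{Main obstacle.}
The hardest step is Step 1, namely producing $\Gamma$ with $(X,\Delta+\Gamma)$ klt and $K_X+\Delta+\Gamma\equiv_f K_{\mathcal F}+\Delta$. Near curves lying in $\operatorname{Sing}\mathcal F$ one needs enough separatrices, with small enough coefficients, to keep the pair klt. My first approach would be to use that, in the klt case, the singularities along $\exc f$ are at worst simple/non-degenerate with klt structure. In that situation \cite{CS21} already gives the klt comparison $K_{\mathcal F}+\Delta \sim K_X+\Delta+\sum (1-\epsilon_i)S_i$ locally. Compared with the F-dlt case of Theorem \ref{thm_dlt_flop}, the klt hypothesis supplies the needed $\epsilon$-room without any Camacho--Sad analysis.
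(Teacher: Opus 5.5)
Your Step 1 carries the whole proof, and as stated it does not go through. Nothing in \cite{CS21} provides a boundary $\Gamma$ with coefficients $<1$ such that $(X,\Delta+\Gamma)$ is klt and $K_X+\Delta+\Gamma\equiv_f K_{\mathcal F}+\Delta$. What the separatrices actually give is weaker. Take $S=\sum S_i$, the reduced sum of all separatrices meeting $\exc f$. The comparison in Lemma \ref{lem_nef_comparison_lem} then gives only the inequality $(K_{\hat X}+\Delta+S)\cdot C\le (K_{\mathcal F}+\Delta)\cdot C=0$, and the pair $(\hat X,\Delta+S)$ is log canonical (log terminal) but not klt. To get a klt pair you must lower the coefficients of $S$. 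This changes the degree on $C$ by $\epsilon\, S\cdot C$, and the discrepancy room from the klt hypothesis says nothing about the sign of $S\cdot C$. What matters is degrees on the complete curve $C$, so a ``local'' linear equivalence $K_{\mathcal F}+\Delta\sim K_X+\Delta+\sum(1-\epsilon_i)S_i$ is vacuous here. Your assertion that klt supplies the needed room ``without any Camacho--Sad analysis'' is therefore mistaken. Compare Theorem \ref{thm_dlt_flop}: there $S_1\cdot C<0$, and lowering the coefficient of $S_1$ destroys nefness unless another separatrix of positive degree compensates.

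The missing idea is that klt enters precisely through Camacho--Sad. Since $(\mathcal F,\Delta)$ is klt, $\sing\mathcal F$ has no one-dimensional components. So the Camacho--Sad formula (extended to klt $X$ as in Remark \ref{rem_cs_singular}) gives $S_i|_{S_i}\equiv_f 0$ for every $i$. Hence $S$ is $f$-nef, and $-(K_{\hat X}+\Delta+(1-\epsilon)S)=-(K_{\hat X}+\Delta+S)+\epsilon S$ is $f$-nef with klt singularities. After algebraizing and a small perturbation one gets $(X,\tilde\Gamma)$ klt with $-(K_X+\tilde\Gamma)$ $f$-ample, so $X$ is of Fano type over $Z$, and \cite{BCHM06} produces the $D$-ample model directly. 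Numerical triviality is never needed, so your construction $\Theta=\Delta+\Gamma+\delta D$ is an unnecessary detour.

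You also skip a point the paper has to handle. Both Lemma \ref{lem_nef_comparison_lem} and Camacho--Sad need the (formal) separatrices to be $\mathbb Q$-Cartier, which can fail on a non-$\mathbb Q$-factorial klt $X$. The paper fixes this in three moves:
\begin{enumerate}
\item it passes to a small $\mathbb Q$-factorialisation;
\item it algebraizes $mS_i$ via reflexive sheaves $M_i$ as in \cite[\S 4 and \S 5]{CS21};
\item it checks that every separatrix upstairs is the strict transform of some $S_i$.
\end{enumerate}
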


\medskip 

Finally, we show that rank one foliations also display 
pathologies from the point of view of flops and canonical models.  Even
when a rank one foliation has canonical singularities and admits a
flopping contraction, the corresponding $D$-flop need not exist (cf. Theorem \ref{t_flopsdonotexist}).

We also construct examples showing that the basepoint-free 
theorem (which already fails to hold for foliations on surfaces) can fail in more dramatic ways than would be expected from the 
failure for surface foliations.
 More precisely, we construct a smooth
projective threefold endowed with a rank one foliation with canonical
singularities whose canonical divisor is nef and big, but whose null
locus cannot be contracted, even in the category of algebraic spaces (cf. Theorem \ref{t_canonicalmodelsdonotexist}).
These examples are found by first choosing an appropriate $\mathbb P^1$-bundle over a Hilbert modular surface, $P \to X$, such that one of the tautological foliations on the Hilbert modular surface lifts to $P$.  We then ``perturb'' this lift slightly so that it acquires a tangency with the $\mathbb P^1$-bundle structure along a divisor.  The null locus of this foliation will then be a copy of $X$ and the restriction of our foliation to this copy of $X$ is precisely one of the tautological foliations.  As is now well-known, these foliations are non-abundant, and this obstructs the contraction of the null locus.

\subsection{Acknowledgements}  
We would like to thank Federico Bongiorno, Enrica Floris and Jihao Liu for several useful discussions on the content of this paper. 
The first author is partially funded by a Simons Collaboration grant. The second author is supported by the EPSRC Centre for Doctoral Training in Geometry and Number Theory (LSGNT). The third author is partially funded by EPSRC.

\section{Preliminaries}

We work over the field of complex numbers $\mathbb C$.

\subsection{Notations} \label{s_notation}
Let $X$ be a normal projective variety and let $M$ be a $\mathbb Q$-Cartier $\mathbb Q$-divisor on $X$. We define the {\bf null locus} of $M$ as
\[
\Null M\coloneqq \bigcup V,
\]
where the union runs over all the subvarieties $V\subset X$ such that $M|_V$ is not big.

\medskip 

Let $X$ be a normal variety of dimension $n$.  A {\bf foliation $\mathcal F$ of rank $r$} (or, equivalently, of {\bf co-rank $n-r$}) on $X$ is a coherent subsheaf $T_{\cal F}\subset T_X$ of rank $r$ which is saturated and closed under Lie bracket. A subvariety $S\subset X$ is said to be {\bf $\cal F$-invariant} if, 
in a neighbourhood  $U$ of the generic point of $S$, $T_{\mathcal F}$ is locally free and for  any section $\partial \in H^0(U,T_{\mathcal F})$, we have that $\partial (\mathcal I_{S\cap U})\subset \mathcal I_{S\cap U}$, where $\mathcal I_{S\cap U}$ denotes the ideal sheaf of $S\cap U$ in $U$. 
We say that a subvariety $V \subset X$ is {\bf tangent to} $\mathcal F$
if, at the general point of $V$, we have
\(
T_V \subset T_{\mathcal F}|_V.
\)

If $\pi\colon Y\dashrightarrow X$ is a dominant map between normal varieties and $\cal F$ is a foliation on $X$, we denote by $\pi^{-1}\cal F$ the {\bf induced foliation} on $Y$ (e.g. see \cite[\S 2.3]{CS21}). 
We refer to \cite[\S 2.4 and \S 3.3]{CS21} and \cite[\S 2.3]{CS20} for the standard definitions of foliated singularities (e.g. canonical, log canonical, F-dlt, non-dicritical) and we refer to \cite[\S 2.6]{CS21} for some standard definitions in the Minimal Model Program. 
In particular, if \(X\) is a normal projective variety and  \(\mathcal F\) is a foliation on \(X\) such that \(K_{\mathcal F}\) is \(\mathbb Q\)-Cartier, then 
a {\bf \(K_{\mathcal F}\)-minimal model} of \(\mathcal F\) is a $K_{\mathcal F}$-negative birational
contraction
\(
    \varphi:X\dashrightarrow Y
\)
to a normal projective variety \(Y\), such that if $\mathcal F_Y$ is the induced foliation on $Y$ then $K_{\mathcal F_Y}$ is nef.

We say that a normal variety $X$ is {\bf potentially klt} if there exists an
effective $\mathbb Q$-divisor $\Delta$ on $X$ such that $(X,\Delta)$ is klt.

\medskip

By a \textbf{flopping contraction} for a
foliated pair $(\mathcal F,\Delta)$ on a normal variety $X$ we mean a
projective birational morphism
\(  f\colon X\to Z \)
such that
\begin{enumerate}
    \item $f_*\mathcal O_X=\mathcal O_Z$;
    \item $\operatorname{Exc}f$ is one-dimensional; and
    \item $K_{\mathcal F}+\Delta\equiv_f 0$.
\end{enumerate}
Suppose moreover that 
 $D$ is a $\mathbb Q$-Cartier 
divisor on $X$ such that $-D$ is $f$-ample.  A \textbf{$D$-flop} of
$f$ is a birational map
\(
    \phi\colon X\dashrightarrow X^+
\) over $Z$
which is an isomorphism in codimension one and such that, if
$D^+:=\phi_*D$ and $f^+\colon X^{+}\to Z$ is the induced morphism, then $X^+$ is a normal variety, $D^+$ is an $f^+$-ample $\mathbb Q$-Cartier divisor.  Note that we do not assume that
$\rho(X/Z)=1$.

If $X$ is a smooth threefold and $\cal F$ is a co-rank one foliation on $X$ then \cite[Lemma 2.14]{CS21} implies that $\cal F$ is non-dicritical if and only if it is strongly non-dicritical, i.e. for any sequence of blow-ups 
\[
X_n\to \dots \to X_1\to X
\]
in smooth centres tangent to $\cal F_i$ or smooth centres contained in $\sing \cal F_i$, where $\cal F_i$ is the induced foliation  on $X_i$, we have that every component of the exceptional locus of $X_n\to X$ is $\cal F_n$-invariant. 
Moreover,  by \cite[Theorem 11.3]{CS21}, it follows that if $X$ is a threefold and $(\mathcal F,\Delta)$ is a co-rank one foliated pair which is  F-dlt or canonical, then $\mathcal F$ has non-dicritical singularities. Similarly, 
\cite[Corollary III.i.4]{MP13} implies that if $X$ is a normal variety and $\cal F$ is a rank one foliation with canonical singularities, then $\cal F$ has non-dicritical singularities.

\subsection{Extending Separatrices}

For the reader's convenience we recall some facts relating to the extension (or prolongation) of (formal) separatrices of co-rank one foliations.  The basic ideas are due to \cite{CC92}, but we will recall variants of their ideas in our set-up found in \cite{spicer20} and \cite{CS21}.  

Let $X$ be a quasi-projective threefold, let $\mathcal F$ be a foliation of co-rank one on $X$ and suppose that $\mathcal F$ is F-dlt. In particular, $\mathcal F$ is non-dicritical (cf.  \cite[Theorem 11.3]{CS21}).
Let $C$ be a curve in $X$ which is tangent to $\mathcal F$. 
Then, applying \cite[Lemma 3.14]{CS21} to the
pair \((\mathcal F,0)\), there exists, at a general point of \(C\), a
(possibly formal) separatrix whose support contains the germ of \(C\).
Indeed, if $C$ is not contained in $\sing \mathcal F$ then  this separatrix is unique, while if $C$ is contained in $\sing \mathcal F$ then the foliation is simple at the general point of \(C\) (cf. \cite[Lemma 3.8]{CS21}),
and we can take one of the corresponding local separatrices.

A priori, given a closed point $x \in C$, a separatrix $S$ can only be defined at the formal completion of $X$ at $x$.  However, thanks to \cite[Lemma 3.5]{CS21} we may extend $S$ to a formal subscheme of $\hat{X}$, the completion of $X$ along $C$.
Thus, in this situation we may without ambiguity refer to the set of all separatrices which meet $C$ as being a set of formal subschemes of $\hat{X}$.

\subsection{Adjunction type results in the co-rank one case}

\begin{lemma}
\label{lem_nef_comparison_lem}
Let $X$ be a normal threefold and let $\cal F$ 
be a foliation of co-rank one on $X$.  Suppose that 
\begin{enumerate}
\item $X$ is potentially klt;
\item $K_{\mathcal F}$ is $\mathbb Q$-Cartier;
\item $\mathcal F$ has non-dicritical singularities; and 
\item $\mathcal F$ has F-dlt singularities away from a closed subset of codimension at least $3$.
\end{enumerate}
Let $C$ be an irreducible curve tangent to $\mathcal F$ and let $\widehat X$ be the formal completion of $X$ along $C$. Let
$T_1, \dots, T_m$ be the collection of all $\mathcal F$-invariant divisors 
in $\widehat X$ meeting $C$.
Suppose that $K_{\widehat X}+\sum_{i = 1}^m T_i$ is $\bb Q$-Cartier.

Then, $(K_{\cal F} - (K_{\widehat X}+\sum T_i))\cdot C \geq 0$.
\end{lemma}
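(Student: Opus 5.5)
We will compare the two divisors through a foliated adjunction-type formula on a resolution, and then push the comparison down to $C$. The model is the adjunction/comparison statement of \cite{CS21} (cf. the proof of the existence of flips, where $K_{\mathcal F}$ is compared with $K_X+\sum T_i$ along a curve tangent to $\mathcal F$). The key fact we expect to use is local. Near a general point of an invariant divisor, or of a curve tangent to a simple (F-dlt) foliation, the difference $K_{\mathcal F}-(K_X+\sum T_i)$ is represented by the conormal-type divisor $-N^*$-correction, which is effective along the tangency locus. Concretely, writing $\mathcal F$ locally by a $1$-form $\omega$ and $h=\prod t_i$ for the local equations of the invariant divisors, the form $\omega/h$ is a logarithmic form. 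The divisor of zeros of this form, viewed as a section of $\Omega^1_{\widehat X}(\log \sum T_i)\otimes N_{\mathcal F}\otimes\mathcal O(-\sum T_i)$, gives $K_{\mathcal F}-(K_{\widehat X}+\sum T_i)\sim_{\mathbb Q} Z$ for some $\mathbb Q$-divisor $Z$.

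The steps are as follows.

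\textbf{Step 1: reduction to a resolution.} First reduce to the case where $\mathcal F$ is F-dlt near $C$. Take an F-dlt modification (or a foliated log resolution) $\pi\colon Y\to X$ that is an isomorphism over the generic point of $C$, with induced foliation $\mathcal G$. Hypotheses (3) and (4) guarantee two things. Every $\pi$-exceptional divisor meeting the preimage of $C$ is $\mathcal G$-invariant, by non-dicriticality. Also, the discrepancies of $\mathcal F$ and of $(\widehat X,\sum T_i)$ can be compared: on invariant exceptional divisors both $K_{\mathcal G}$ and $K_Y+\sum T_i^Y+\sum E$ pull back with the same log discrepancy $\epsilon=0$ behaviour, up to an error that is $\pi$-exceptional and effective. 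Hypothesis (1) ensures that $K_{\widehat X}$ makes sense and the relevant MMP/modification exists.

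\textbf{Step 2: the local computation.} On $Y$, near the strict transform $C_Y$, the foliation is simple and the separatrices through $C_Y$ together with the exceptional divisors form an snc configuration $\sum T_i^Y+\sum E_j$. We will show
\[
K_{\mathcal G}-\Bigl(K_{\widehat Y}+\sum T_i^Y+\sum E_j\Bigr)\sim Z_Y\ge 0,
\]
with $Z_Y$ not containing $C_Y$. This uses the description of simple singularities \cite[Lemma 3.8]{CS21}, under which $\omega/h$ is a nowhere-vanishing log form away from the tangency divisor. It also uses the fact that $C_Y$ is tangent, so it is not contained in the zero locus of $\omega/h$ generically. Then $Z_Y\cdot C_Y\ge 0$.

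\textbf{Step 3: push forward.} Push forward to $X$. Since both sides are $\mathbb Q$-Cartier on $\widehat X$ by hypothesis, write $\pi^*(K_{\mathcal F}-K_{\widehat X}-\sum T_i)=Z_Y+F$, where $F$ is $\pi$-exceptional. Apply the negativity lemma to $F$ after comparing discrepancies as in Step 1; we expect $F\ge 0$ up to a term not meeting $C_Y$ negatively. Then intersect with $C_Y$ using the projection formula to conclude.

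The main obstacle is Step 3, namely controlling the sign of the exceptional correction $F$. A divisor $E$ on which $\mathcal F$-discrepancy and log discrepancy differ could intersect $C_Y$ negatively. First we will try to show that for every invariant exceptional divisor the coefficient in $K_{\mathcal G}$ is at least that in $K_Y+\sum T^Y+E$. This is the foliated inequality $a(E,\mathcal F)\ge a(E,\widehat X,\sum T_i)-\epsilon(E)+1$, proved via the log form $\omega/h$ pulling back to a log form. We then hope this gives $F\ge 0$ with $C_Y\not\subset F$.
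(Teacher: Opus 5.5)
\textbf{There is a genuine gap: the identity in Step 2 is false, and it is where the whole content of the lemma lies.}

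The form $\omega/h$ is a section of the rank-three bundle $\Omega^1_{\widehat X}(\log\sum T_i)\otimes\mathcal O(K_{\mathcal F}-K_{\widehat X}-\sum T_i)$. Its divisorial zero locus does not determine the class of the twisting line bundle: a nowhere-vanishing section of $V\otimes L$ with $\operatorname{rk}V=3$ does not trivialise $L$. For example, let $\omega=y^2dx-x\,dy$ and let $C\subset\{x=0\}$ cross the singular curve $\{x=y=0\}$. Then $\omega/(xy)=y\,dx/x-dy/y$ does not vanish. Its residue along $\{x=0\}$ is $y$, and this residue is a genuine section of $\mathcal O(K_{\mathcal F}-K_X-\sum T_i)|_C$; it vanishes at the crossing point. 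So your $Z_Y$ does not compute the intersection number.

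What does work is projecting to a rank-one quotient, namely the residue along an invariant divisor $T_1\supset C$. This is the paper's argument, phrased as a comparison of foliated and classical adjunction on the normalisation $T_1^\nu$:
$K_{\mathcal F}|_{T_1^\nu}=K_{T_1^\nu}+\Theta$ and $(K_{\widehat X}+\sum T_i)|_{T_1^\nu}=K_{T_1^\nu}+\Theta'$, with $0\le\Theta'\le\Theta$ by \cite[Corollary 3.20]{CS21}. It then suffices that $C'\not\subset\operatorname{Supp}(\Theta-\Theta')$.

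This requires a choice your proposal never makes. Suppose $C\subset\operatorname{Sing}\mathcal F$ with saddle-node transverse type. Along the weak separatrix the residue vanishes to order $k\ge 1$ along $C$ (even though $\omega/h$ itself does not vanish). Hence $m_{C'}(\Theta-\Theta')=k$, and when $C'^2<0$ on $T_1^\nu$ this gives no sign information. One must take $T_1$ to be the strong separatrix, where $m_{C'}\Theta=m_{C'}\Theta'=1$. The remaining cases are:
\begin{itemize}
\item $C\not\subset\operatorname{Sing}\mathcal F$: here $m_{C'}\Theta=0$ by \cite[Proposition 3.14]{CS23};
\item $C\subset\operatorname{Sing}X$: here $m_{C'}\Theta=m_{C'}\Theta'$.
\end{itemize}
Your assertion that $Z_Y$ does not contain $C_Y$ hides exactly the saddle-node case.

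\textbf{Step 3 has the wrong sign.} Write $\pi^*(K_{\mathcal F}-K_{\widehat X}-\sum T_i)=\bigl(K_{\mathcal G}-K_{\widehat Y}-\sum T_i^Y-\sum E_j\bigr)+F$. For invariant $E_j$ one gets $F=\sum_j\bigl(a(E_j,\widehat X,\sum T_i)+1-a(E_j,\mathcal F)\bigr)E_j$. So $F\ge 0$ needs $a(E,\mathcal F)+\epsilon(E)\le a(E,\widehat X,\sum T_i)+1$, which is the direction consistent with log canonicity of $(\widehat X,\sum T_i)$, cf. \cite[Lemma 8.14]{spicer20}. The inequality you set out to prove is the reverse one and would give $F\le 0$.

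Even with the sign corrected, Step 3 only transports the lemma to $\widehat Y$. There you would still have to prove $(K_{\mathcal G}-K_{\widehat Y}-\sum T_i^Y-\sum E_j)\cdot C_Y\ge 0$ by the residue argument above, including the strong/weak choice. The resolution is in fact unnecessary: \cite[Corollary 3.20]{CS21} already gives $\Theta-\Theta'\ge 0$ on all of $T_1^\nu$, so only the coefficient of $C'$ matters.
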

\begin{proof}

Perhaps relabelling the $T_i$ we may assume that $C \subset T_1$. Let $\nu\colon T_1^\nu \rightarrow T_1$ be the normalisation and let $C'$ be an irreducible component of $\nu^{-1}(C)$.
If 
$C \subset \sing \mathcal F$, then 
\cite[Lemma 3.8]{CS21} implies that  $\cal F$ has simple singularities at the generic 
point of $C$. Furthermore, if the simple singularity is of type (2) 
(cf. \cite[Definition 2.8]{CS21})
then we may  further assume that
$T_1$ is a strong separatrix (cf. \cite[Lemma 2.20]{CS21}). 

Let $M \coloneqq (K_{\cal F} - (K_{\widehat X}+\sum_{i = 1}^m T_i))\vert_{T_1^\nu}$
and
let us write
\[
K_{\cal F}|_{T_1^\nu}=K_{T_1^{\nu}}+\Theta\qquad \text{and}\qquad  
(K_{\widehat X}+\sum_{i =1}^m T_i)|_{T_1^{\nu}}=K_{T_1^{\nu}}+\Theta'
\]
then \cite[Corollary 3.20]{CS21}
 implies that $0\le \Theta'\le \Theta$ and if $C\subset \sing X$ then $m_{C'}\Theta=m_{C'}\Theta'$. 
In particular, we have that  $M \equiv \Theta-\Theta' \geq 0$. (We remark that the cited Corollary requires that $T_i$ is $\mathbb Q$-Cartier for all $i$, 
however, this is not necessary. Indeed, our claimed inequality is local about the generic 
point of $C$, and so by restricting to a neighbourhood of the generic point of $C$, 
we see that $T_i$ is $\mathbb Q$-Cartier since $X$ is potentially klt).

If we can show that 
$m_{C'}(\Theta-\Theta') = 0$ then we may conclude that $M\cdot C' \geq 0$.  As observed earlier, $m_{C'}(\Theta-\Theta') = 0$ if $C \subset \sing X$, so we may freely assume that $C$ is not contained in the singular locus of $X$.
We argue in cases based on whether $C \subset \sing \mathcal F$ or not.

If $C$ is not contained in $\sing \mathcal F$, 
then \cite[Proposition 3.14]{CS23} implies that $m_{C'}\Theta=0$,
 and so $m_{C'}(\Theta-\Theta') = 0$ as required.

If $C$ is contained in $\sing \cal F$, then by the
choice of \(T_1\) above and the local coefficient computation in 
\cite[Corollary 3.20]{CS21}, we have that 
$m_{C'}\Theta = m_{C'}\Theta' = 1$.  Hence $m_{C'}(\Theta-\Theta') = 0$, as required. 
\end{proof}

\section{Minimal Models of rank one foliations}
The goal of this Section is to prove Theorem \ref{t_uniqueness_min_models}.

We begin by recalling the construction of the vector bundle $J^m_{\cal{F}}L$ of $m$-jets of a line bundle $L$ along a foliation $\cal{F}$ and the Atiyah sequence associated to it: 

\begin{definition-lemma}\label{Jets 1}
	Let $X$ be a normal variety, let $\mathcal F$ be a rank one foliation on $X$ with Gorenstein singularities, and let $L$ be a line bundle on $X$. 
    
    Then, for every $m\ge 0$, there exists a vector bundle $J^m_{\mathcal F}L$ of rank $m+1$ on $X$ such that
    $J^0_{\mathcal F}L\coloneqq L$ and, for every $m\ge 1$, $J^m_{\mathcal F}L$ fits into the exact sequence
\[
0\rightarrow \mathcal{O}_X(mK_{\mathcal F})\otimes L
\rightarrow J^m_{\mathcal F}L
\rightarrow J^{m-1}_{\mathcal F}L
\rightarrow 0.
\]

Moreover, there exists a $\mathbb C$-linear natural  map
\[
    j^m_{\mathcal F}\colon L\longrightarrow J^m_{\mathcal F}L,
\]
such that, for any section $s$ of $L$ and at every point $x\in X$, 
after shrinking around $x$, we may choose a  generator $\partial$
of $\mathcal F$ and a generator $\alpha$ of $L$ such that, if we write 
$s=\tilde{s}\alpha$ with $\tilde s\in \mathcal O_X$, then
\[
j_{\mathcal F}^m(s)=
(\tilde{s},\partial \tilde{s},\dots,\partial^m\widetilde{s})
\]
in the induced trivialisation of $J^m_{\mathcal F}L$.

We will refer to $J^m_{\mathcal F}L$ as the {\bf vector bundle of foliated $m$-jets of $L$ along $\mathcal F$} and to $j_{\mathcal F}^m(s)$ as the {\bf foliated $m$-jet of $s$}. 
\end{definition-lemma}

\begin{proof}

We follow the construction of \(J^m_{\mathcal F}L\) in \cite[\S 6.2]{pereira2001}. The construction is the same as in the case where \(X\) is smooth.

Dualising the inclusion
\(T_{\mathcal F}\hookrightarrow T_X\) gives a morphism
\[
    \phi:\Omega^1_X\longrightarrow T_{\mathcal F}^*
    \simeq \mathcal O_X(K_{\mathcal F}).
\]
Let
\(\mathcal U=\{U_\lambda\}_{\lambda\in\Lambda}\) be a trivialising open cover of \(X\)
for both \(\mathcal O_X(K_{\mathcal F})\) and \(L\).
For each \(\lambda\), let \(\alpha_\lambda\) be a generator of
\(L|_{U_\lambda}\) and let \(\beta_\lambda\) be a generator of
\(\mathcal O_X(K_{\mathcal F})|_{U_\lambda}\).
Let \(l_{\lambda\mu}\) and \(e_{\lambda\mu}\) be the transition functions:
\[
    \alpha_\mu=l_{\lambda\mu}\alpha_\lambda,
    \qquad
    \beta_\mu=e_{\lambda\mu}\beta_\lambda .
\]

Thus, if \(f\in\mathcal O_{U_\lambda}\), we define a local derivation
\(\partial_\lambda\) by
\[
    \phi(df)=\partial_\lambda(f)\beta_\lambda .
\]
On \(U_\lambda\cap U_\mu\), we then have
\[
    \partial_\lambda=e_{\lambda\mu}\partial_\mu .
\]

Let \(s\) be a local section of \(L\). On \(U_\lambda\), write
\[
    s=s_\lambda^{(0)}\alpha_\lambda,
\quad s_\lambda^{(0)}\in\mathcal O_{U_\lambda}.
\]
For $i\ge 1$, define 
\[
    s_\lambda^{(i)}:=\partial_\lambda(s_\lambda^{(i-1)}).
\]

A direct computation shows that on overlaps \(U_\lambda\cap U_\mu\),
the vectors
\[
    (s_\lambda^{(0)},\dots,s_\lambda^{(m)})
    \quad\text{and}\quad
    (s_\mu^{(0)},\dots,s_\mu^{(m)})
\]
are related by a lower-triangular $(m+1)\times (m+1)$ matrix $A_{\lambda \mu}^{(m)}$ whose diagonal entries are
\[
    l_{\lambda\mu},\; l_{\lambda\mu}e_{\lambda\mu},\;\dots,\; l_{\lambda\mu}e_{\lambda\mu}^m.
\]
Hence these transition functions define a rank \(m+1\) vector bundle
\(J^m_{\mathcal F}L\). Consider the map
 \[
     \pi_m:J^m_{\mathcal F}L\rightarrow J^{m-1}_{\mathcal F}L,
 \]
 which is locally given by
 \[
     \pi_m(s_\lambda^{(0)},s_\lambda^{(1)},\ldots,s_\lambda^{(m)})
     =
     (s_\lambda^{(0)},\ldots,s_\lambda^{(m-1)}).
 \]
 Since the transition matrices \(A_{\lambda \mu}^{(m)}\) of \(J^m_{\mathcal F}L\) are lower triangular, $\pi_m$ glues to give a surjective morphism of vector bundles. We claim that
 \[
     \ker(\pi_m)\cong \mathcal O_X(mK_{\mathcal F})\otimes L.
 \]
 Indeed, locally, we have
 \[
     \ker(\pi_m)|_{U_\lambda}
     =
     \{(0,\ldots,0,a_\lambda)\mid a_\lambda\in\mathcal O_{U_\lambda}\}.
 \]
 On $U_\lambda\cap U_\mu$, the transition matrix \(A_{\lambda \mu}^{(m)}\) identifies
 \[
     (0,\ldots,0,a_\mu)
 \]
 with
 \[
     (0,\ldots,0,l_{\lambda\mu}e_{\lambda\mu}^m a_\mu).
 \]
 Thus, $\ker(\pi_m)$ is locally free of rank one with transition function
 $l_{\lambda\mu}e_{\lambda\mu}^m$. Therefore
 \[
     \ker(\pi_m)\cong \mathcal O_X(mK_{\mathcal F})\otimes L,
 \]
 as claimed, and this gives the required exact sequence.

\medskip

We now construct the natural map
\[
j^m_{\mathcal F}: L \longrightarrow J^m_{\mathcal F}L.
\]

Let \(s\in H^0(V,L)\) for some open set \(V\subset X\).
On each \(U_\lambda\cap V\), write
\[
    s=f_\lambda \alpha_\lambda,
\quad f_\lambda\in\mathcal O_{U_\lambda\cap V}.
\]
Define the local jet vector
\[
    j^m_{\mathcal F,\lambda}(s)
    :=
    \bigl(f_\lambda,\partial_\lambda f_\lambda,\dots,\partial_\lambda^m f_\lambda\bigr).
\]
We first check compatibility with the transition functions of \(J^m_{\mathcal F}L\).
On overlaps \(U_\lambda\cap U_\mu\),
we have
\[
    f_\lambda = l_{\lambda\mu} f_\mu,
\quad\text{and}\quad
\partial_\lambda = e_{\lambda\mu}\partial_\mu.
\]

By proceeding by induction on \(r\ge0\) and by the Leibniz rule, we have that 
\[
\partial_\lambda^r(f_\lambda)
=
\sum_{q=0}^r a^{r,q}_{\lambda\mu}\,\partial_\mu^q(f_\mu),
\]
where \(a^{r,q}_{\lambda\mu}\in \mathcal O_X(U_\lambda\cap U_\mu)\)
are the coefficients in the transition matrices 
\(A_{\lambda \mu}^{(m)}\)
defining \(J^m_{\mathcal F}L\).
In particular, this gives
\[
    j^m_{\mathcal F,\lambda}(s)
    =
    A^{(m)}_{\lambda\mu}\, j^m_{\mathcal F,\mu}(s).
\]
Therefore the local vectors \(j^m_{\mathcal F,\lambda}(s)\) glue to a
global section
\[
    j^m_{\mathcal F}(s)\in H^0(V,J^m_{\mathcal F}L).
\]
This defines a \(\mathbb C\)-linear morphism
\(
    j^m_{\mathcal F}: L \longrightarrow J^m_{\mathcal F}L
\).
\end{proof}

\begin{lemma}\label{NC_Jets}
    Let $X$ be a normal projective variety of dimension $n$ 
    and let $\mathcal F$ be a foliation of rank one on $X$ such that 
    $K_{\mathcal F}$ is $\mathbb Q$-Cartier. 
    Let $L$ be an ample line bundle on $X$.
    Let 
    $j\colon U\hookrightarrow X$ be the inclusion of the open set $U$ where 
    $K_{\mathcal F}$ is Cartier. 
        We set $ J^0_L\coloneqq L$ and, for any $m\geq 0$, 
    \[
         J^m_L := j_*J^m_{\mathcal F_U}(L|_U),
        \qquad
         L_m := \mathcal O_X(mK_{\mathcal F})\otimes L.
    \]

    Then, after possibly replacing $L$ by a 
    sufficiently high multiple, we have that:
    \begin{enumerate}
        \item $ L_m$ is globally generated 
        for $m=0,1,\ldots,n$;

        \item $H^1(X, L_m)=0$ for $m=0,1,\ldots,n$;

        \item for every $m=1,\ldots,n$, if
        \[
            \psi_m: J^m_L\longrightarrow  J^{m-1}_L
        \]
        is the natural morphism and
        \(
             G_{m-1}:=\operatorname{Im}(\psi_m),
        \)
        then there is a short exact sequence
        \[
            0\longrightarrow 
             L_m
            \longrightarrow 
             J^m_L
            \longrightarrow 
             G_{m-1}
            \longrightarrow 0.
        \]
        Moreover,
        \[
             G_{m-1}|_U\simeq J^{m-1}_{\mathcal F_U}(L|_U),
        \]
        and the quotient
        \(
             J^{m-1}_L/ G_{m-1}
        \)
        is supported on $X\setminus U$. 

  \item a general section $s\in H^0(X,L)$ has the property that its 
foliated $n$-jet
\[
    j^n_{\mathcal F}(s)\in H^0(X,j_*J^n_{\mathcal F_U}(L|_U))
\]
is nowhere vanishing on
\[
    \Omega:=U\setminus \operatorname{Sing}^+\mathcal F .
\]
    \end{enumerate}
\end{lemma}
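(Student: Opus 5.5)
Items (1) and (2) follow from Serre's theorems once the finitely many sheaves are handled. Since $K_{\mathcal F}$ is $\mathbb Q$-Cartier, the reflexive sheaves $\mathcal O_X(mK_{\mathcal F})$ for $m=0,\dots,n$ are coherent. Fix an ample $H$. For $L=kH$ with $k\gg0$, Serre vanishing and Serre's global generation theorem, applied to each of these $n+1$ coherent sheaves, give that $L_m=\mathcal O_X(mK_{\mathcal F})\otimes L$ is globally generated with $H^1(X,L_m)=0$. We are free to pass to further multiples of $H$ later; each time we re-apply these statements for the same finite list of sheaves.

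For (3), restrict Definition-Lemma \ref{Jets 1} to $U$, where $\mathcal F_U$ is Gorenstein. This gives exact sequences
\[
0\to \mathcal O_U(mK_{\mathcal F})\otimes L\to J^m_{\mathcal F_U}(L|_U)\to J^{m-1}_{\mathcal F_U}(L|_U)\to 0.
\]
Since $j_*$ is left exact, we get $0\to j_*(L_m|_U)\to J^m_L\xrightarrow{\psi_m} J^{m-1}_L$. The sheaf $L_m$ is reflexive and $X\setminus U$ has codimension $\ge 2$ ($X$ is normal and $K_{\mathcal F}$ is Cartier in codimension one). Hence $j_*(L_m|_U)=L_m$. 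Setting $G_{m-1}:=\operatorname{Im}\psi_m$ yields the short exact sequence. Over $U$, $j_*$ restricts to the identity and $\psi_m|_U$ is the surjection of Definition-Lemma \ref{Jets 1}. So $G_{m-1}|_U\simeq J^{m-1}_{\mathcal F_U}(L|_U)$, and the cokernel $J^{m-1}_L/G_{m-1}$ is supported on $X\setminus U$. Coherence of $J^m_L$ follows by induction, since it is an extension of coherent sheaves; alternatively, $j_*$ of a locally free sheaf on a big open subset of a normal variety is coherent.

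For (4), the plan is a dimension count. First show that the evaluation map $H^0(X,L)\to J^n_{\mathcal F_U}(L|_U)\otimes k(x)$, $s\mapsto j^n(s)(x)$, is surjective at every $x\in\Omega$. Then the set of pairs $(s,x)$ with $j^n(s)(x)=0$ has codimension $n+1$ in $H^0(X,L)\times\Omega$. Since $\dim\Omega\le n$, its projection to $H^0(X,L)$ is a proper constructible subset, so a general $s$ works. To prove surjectivity, use the filtration from (3) and induct on $m$. Suppose the evaluation $H^0(L)\to J^{m-1}\otimes k(x)$ is surjective. We need sections whose jets up to order $m-1$ vanish at $x$ but whose $m$-th derivative does not. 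Near $x\in\Omega$ the foliation is regular, so by the flow-box theorem (analytically or formally) there are coordinates with $\partial=\partial/\partial z_1$. Thus it suffices that $H^0(X,L)\to L\otimes\mathcal O_X/\mathfrak m_x^{n+1}$ is surjective. This holds for $L$ sufficiently ample, uniformly in $x$, by Serre vanishing for $\mathfrak m_x^{n+1}\otimes L$, using a standard uniform bound such as taking $L=kH$ with $k\ge$ a Castelnuovo--Mumford bound. Then $z_1^m$ times a generator of $L$ realises the required jet.

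The main obstacle is this uniformity in $x$ and its interaction with the definition of $\operatorname{Sing}^+\mathcal F$. I would bound the regularity of $\mathfrak m_x^{n+1}$ uniformly over $x$ by working on the diagonal of $X\times X$: surjectivity of $H^0(L)\to p_{2*}(p_1^*L\otimes\mathcal O/\mathcal I_\Delta^{n+1})$ holds for $L\gg0$ by relative Serre vanishing. I would also need to check that at points of $\Omega$ the foliation is genuinely regular, meaning $\partial(x)\neq 0$. This is exactly where excluding $\operatorname{Sing}^+\mathcal F$ (rather than only the Gorenstein locus) should be used.
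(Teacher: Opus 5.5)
Your proposal is correct and follows essentially the same route as the paper: Serre vanishing and global generation for (1)--(2), left exactness of $j_*$ plus reflexivity of $L_m$ across the codimension-two complement of $U$ for (3), and for (4) uniform separation of ordinary $n$-jets on $\Omega$, surjectivity of $L\otimes\mathcal O_X/\mathfrak m_x^{n+1}\to J^n_{\mathcal F}L\otimes k(x)$ because the local generator $\partial$ does not vanish at points outside $\operatorname{Sing}^+\mathcal F$, and the incidence-correspondence dimension count. Your diagonal argument for uniformity in $x$ supplies a step the paper only asserts, and the flow box is more than you need: any $z\in\mathfrak m_x$ with $\partial z(x)\neq 0$ makes the foliated jets of $1,z,\dots,z^n$ at $x$ triangular with diagonal entries $k!\,(\partial z(x))^k$, which also works if $X$ is singular at $x$.
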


\begin{proof}
After possibly replacing $L$ by a sufficiently high multiple, Serre 
    vanishing and global generation give (1) and (2). 
    On $U$, Lemma \ref{Jets 1} gives the exact sequence
    \[
        0\longrightarrow 
        \mathcal O_U(mK_{\mathcal F})\otimes L|_U
        \longrightarrow 
        J^m_{\mathcal F_U}(L|_U)
        \longrightarrow 
        J^{m-1}_{\mathcal F_U}(L|_U)
        \longrightarrow 0.
    \]
    Applying $j_*$ gives a left exact sequence and a natural morphism
    \[
        \psi_m:
        j_*J^m_{\mathcal F_U}(L|_U)
        \longrightarrow
        j_*J^{m-1}_{\mathcal F_U}(L|_U).
    \]
    Define
    \(
         G_{m-1}:=\operatorname{Im}(\psi_m).
    \)
    Then we have a short exact sequence
    \[
        0\longrightarrow 
        j_*\big(\mathcal O_U(mK_{\mathcal F})\otimes L|_U\big)
        \longrightarrow 
         J^m_L
        \longrightarrow 
         G_{m-1}
        \longrightarrow 0.
    \]
    Since $X$ is normal, $\operatorname{codim}_X(X\setminus U)\geq 2$, and both sheaves are reflexive, we have
    \[
        j_*\big(\mathcal O_U(mK_{\mathcal F})\otimes L|_U\big)
        \simeq
        \mathcal O_X(mK_{\mathcal F})\otimes L
        =:
         L_m.
    \]
    Thus,
    \[
        0\longrightarrow 
         L_m
        \longrightarrow 
         J^m_L
        \longrightarrow 
         G_{m-1}
        \longrightarrow 0
    \]
    is exact.
    Since the original sequence is surjective on $U$, we also have
    \[
         G_{m-1}|_U\simeq J^{m-1}_{\mathcal F_U}(L|_U).
    \]
    Hence the quotient
    \(
         J^{m-1}_L/ G_{m-1}
    \)
    is supported on $X\setminus U$, so (3) holds.


Finally, we prove (4). After possibly replacing again $L$ by a sufficiently high multiple, we may assume that
$L$ separates ordinary $n$-jets at every point of $\Omega$, i.e. 
for every closed point $x\in \Omega$, the natural map
\[
    H^0(X,L)\longrightarrow L\otimes
    \mathcal O_X/\mathfrak m_x^{n+1}
\]
is surjective.

We claim that the foliated $n$-jets of global sections of $L$ generate
$J^n_{\mathcal F}L$ at every point of $\Omega$. Indeed, let $x\in \Omega$.
Since $\mathcal F$ is regular at $x$ and $K_{\mathcal F}$ is Cartier on
$U$, after shrinking around $x$ we may choose a local generator
$\partial$ of $\mathcal F$. Then, after trivialising $L$ locally, the
foliated $n$-jet of a section $s$ of $L$ at $x$ is given by
\[
    j^n_{\mathcal F}(s)(x)
    =
    \big(
        s(x),
        \partial s(x),
        \ldots,
        \partial^n s(x)
    \big).
\]
Note that the evaluation map
\[
    e_n(x)\colon H^0(X,L)\to J^n_{\mathcal F}L\otimes k(x),
    \qquad
    s\mapsto j^n_{\mathcal F}(s)(x)
\]
factors as 
\[
H^0(X,L)\to L\otimes \mathcal O_X/\mathfrak m_x^{n+1}\to 
J^n_{\mathcal F}L\otimes k(x),
\]
where the second map is surjective by construction and the fact that $x\notin {\rm Sing}^+\mathcal F$. 
Thus, since $L$ separates ordinary $n$-jets at $x$, we have that $e_n(x)$
is surjective and our claim follows.

Let $V:=H^0(X,L)$ and consider
the incidence correspondence
\[
    I:=
    \left\{
        (x,[s])\in \Omega\times \mathbb P(V)
        \ \middle|\
        j^n_{\mathcal F}(s)(x)=0
    \right\}.
\]
For fixed $x\in \Omega$, the condition
\(
    j^n_{\mathcal F}(s)(x)=0
\)
imposes $n+1$ independent linear conditions on $V$, because the evaluation
map above is surjective and the rank of 
\(
     J^n_{\mathcal F}L
\)
is $n+1$.
Hence
\[
    \dim I
    \leq
    \dim \Omega+\dim \mathbb P(V)-(n+1)
    <
    \dim \mathbb P(V).
\]
Therefore the image of $I$ in $\mathbb P(V)$ is a proper subset. It follows
that for a general section $s\in H^0(X,L)$, the  jet
\(
    j^n_{\mathcal F}(s)
\)
is nowhere vanishing on $\Omega$.
\end{proof}

\begin{theorem}\label{t_Bertini}
Let $X$ be a normal projective variety of dimension $n$ and let
$\mathcal F$ be a foliation of rank one on $X$ with
 canonical singularities.
Let $L$ be an ample line bundle on $X$. 

Then, after possibly replacing $L$ by a
multiple, we may find $A \in |L|$ such that $A$ does not contain any
log canonical  centre of $\mathcal F$ which is not contained in
$\operatorname{Sing}^+\mathcal F$.
\end{theorem}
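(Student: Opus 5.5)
We take $A$ to be the zero locus of a general section $s\in H^0(X,L)$, where $L$ has been replaced by a multiple as in Lemma \ref{NC_Jets}; in particular the foliated $n$-jet $j^n_{\mathcal F}(s)$ is nowhere vanishing on $\Omega=U\setminus \operatorname{Sing}^+\mathcal F$. The first step is to identify the log canonical centres. Since $\mathcal F$ is canonical of rank one, it is non-dicritical by \cite[Corollary III.i.4]{MP13}. We then use the standard description, from \cite{MP13} and \cite{CS21}: a log canonical centre $W$ of $\mathcal F$ that is not contained in $\operatorname{Sing}^+\mathcal F$ is $\mathcal F$-invariant. Indeed, such a centre is the image of a divisor of discrepancy $0$ or $-\epsilon$ that is invariant, and non-dicriticality forces the centre itself to be invariant. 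Moreover, $W$ meets $\Omega$, since $W\not\subset \operatorname{Sing}^+\mathcal F$. We also need $W\cap U\neq\emptyset$. For this, recall that where $\mathcal F$ is canonical and not in $\operatorname{Sing}^+$, a rank one foliation is locally generated by a vector field and $K_{\mathcal F}$ is Cartier; so we expect $X\setminus U\subset \operatorname{Sing}^+\mathcal F$. If that inclusion fails, we shrink $\Omega$ appropriately and only use points of $W\cap \Omega$, which form a dense open subset of $W$.

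The second step is the contradiction argument sketched in the introduction. Suppose $W\subset A=\{s=0\}$. Pick a general point $x\in W\cap\Omega$. Near $x$, choose a generator $\partial$ of $T_{\mathcal F}$ and a trivialisation of $L$, and write $s=\tilde s\alpha$. Invariance of $W$ means $\partial(\mathcal I_W)\subset \mathcal I_W$ near the generic point of $W$. We want this inclusion at $x$ itself; since $\mathcal F$ is regular at $x$ and $x$ is general, we may take $x$ in the open set where the invariance condition holds. Because $\tilde s\in\mathcal I_W$, induction gives $\partial^k\tilde s\in \mathcal I_W$ for all $k$. Hence $\partial^k\tilde s(x)=0$ for $0\le k\le n$, i.e. $j^n_{\mathcal F}(s)(x)=0$. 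This contradicts Lemma \ref{NC_Jets}(4). Note that any $n+1$ jet entries would already suffice; the bound $n$ just matches the lemma.

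The main obstacle is the first step: showing rigorously that every log canonical centre not contained in $\operatorname{Sing}^+\mathcal F$ is $\mathcal F$-invariant, and that its generic point lies in $U$ with $\mathcal F$ regular there. The plan is to argue as follows. If $W$ were not invariant, then at a general point of $W$ either the foliation is transverse to $W$, or $W$ is in the singular locus. In the transverse, regular case, the foliation is locally a product $\mathbb C\times Y$ by flow-box. Blowing up or extracting over $W$ then produces only non-invariant divisors with discrepancy $\ge$ that of a smooth non-invariant situation, namely positive or $\epsilon=1$ type. So $W$ would not be an lc centre unless it is a non-invariant divisor-free centre, which the definition excludes. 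For points in $\operatorname{Sing}\mathcal F\setminus\operatorname{Sing}^+\mathcal F$, we rely on the definition of $\operatorname{Sing}^+$ to guarantee regularity after the appropriate local cover. Where the general point of $W$ is in $\operatorname{Sing}\mathcal F$ but not in $\operatorname{Sing}^+$, we first pass to the index one cover of $K_{\mathcal F}$, where the foliation is regular. Jet vanishing is preserved under étale pullback, and invariance pulls back, so the argument of the second step runs upstairs.
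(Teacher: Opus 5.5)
Your second step, the jet argument at a general point of $W\cap\Omega$, is exactly the paper's. There is a genuine gap in your first step, which you rightly flag as the crux: it is not established, and the justifications you give do not work.

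\textbf{Invariance of $W$.} Non-dicriticality does not imply that the centre of an invariant divisor is invariant. It concerns invariance of divisors lying over points of $X$, not invariance of their centres. For the regular foliation generated by $\partial_{x_1}$ on $\mathbb C^3$, the exceptional divisor of the blow-up of the origin is invariant, but the origin is not.

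What actually forces invariance is the combination of two facts. Canonicity forces every lc place $E$ to be invariant with $a(E,\mathcal F)=0$, and this must be combined with regularity of the generator after the index-one cover. Your flow-box sketch for this step is also wrong as written. Extracting over a transverse $W$ (for example, blowing up a point or a transverse curve) produces \emph{invariant} divisors, with positive discrepancy. So the point to prove is positivity of discrepancy, not non-invariance.

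\textbf{The generic point of $W$ lies in $U$.} You never prove this. The inclusion $X\setminus U\subset\operatorname{Sing}^+\mathcal F$ that you "expect" is false. Take the quotient of $(\mathbb C^3,\partial_{x_1})$ by $x\mapsto -x$: $K_{\mathcal F}$ has index two at the image of the origin, yet the generator on the index-one cover does not vanish there. Your fallback, using the dense open set $W\cap\Omega$, presupposes $W\not\subset X\setminus U$, which is exactly what has to be shown.

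Running the jet argument on the index-one cover does not rescue this. That cover is only quasi-\'etale and ramifies precisely over $X\setminus U$. Lemma \ref{NC_Jets}(4) gives no control of $j^n_{\mathcal F'}(\sigma^*s)$ at ramification points, since pulled-back sections are invariant under the local Galois group.

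\textbf{How the paper supplies the missing step.} The cover is used only to prove invariance; the jet argument is then run on $X$.
\begin{enumerate}
\item At a general point $z\in W$, take a local quasi-\'etale index-one cover $\sigma\colon V'\to V_z$, so that $\mathcal F'$ is Gorenstein.
\item A component $W'$ of $\sigma^{-1}(W)$ is an lc centre of $\mathcal F'$, by \cite[Lemma 2.20]{spicersvaldi}.
\item Since $W'\not\subset\operatorname{Sing}^+\mathcal F'=\sigma^{-1}(\operatorname{Sing}^+\mathcal F)$, $W'$ is $\mathcal F'$-invariant by \cite[Lemma 2.9]{CS20}.
\item Invariance descends to $W$ by \cite[Lemma 4.2]{CS23}.
\item Invariance in the sense of \S\ref{s_notation} includes local freeness of $T_{\mathcal F}$ near the generic point of $W$. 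Hence $K_{\mathcal F}$ is Cartier there and $W\cap\Omega\neq\emptyset$.
\end{enumerate}
With this step in place, your second step finishes the proof exactly as in the paper.
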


\begin{proof}
Let $L$ be an ample line bundle on $X$ and replace $L$ by the multiple
guaranteed to exist by Lemma \ref{NC_Jets}. Let
\(
j\colon U\hookrightarrow X
\)
be the inclusion of the open set where $K_{\mathcal F}$ is Cartier, and set
\(
    \Omega:=U\setminus \operatorname{Sing}^+\mathcal F .
\)

By Lemma \ref{NC_Jets} (4), we may choose a
general section
\(
    s\in H^0(X,L)
\)
such that the jet
\[
    j^n_{\mathcal F}(s)
    \in
    H^0\bigl(X,j_*J^n_{\mathcal F_U}(L|_U)\bigr)
\]
is nowhere vanishing on $\Omega$. Let
\[
    A:=\{s=0\}\in |L|.
\]

We claim that $A$ does not contain any log canonical centre of $\mathcal F$ which
is not contained in $\operatorname{Sing}^+\mathcal F$. Suppose, by 
contradiction, that there exists such a log canonical centre $Z\subset A$.

First, we show that $Z$ is $\mathcal F$-invariant.
Let $z\in Z$ be a general point. There is an analytic open
neighbourhood $V_z$ of $z$ and a quasi-\'etale cover
\(
    \sigma\colon V'\to V_z
\)
such that the induced foliation $\mathcal F'$ on $V'$ has Gorenstein
singularities. Let $Z'$ be an irreducible component of
$\sigma^{-1}(Z\cap V_z)$ dominating $Z\cap V_z$. Then 
\cite[Lemma 2.20]{spicersvaldi} implies that $Z'$ is a log canonical
centre of $\mathcal F'$. Since
\[
    Z'\not\subset {\rm Sing}^+\mathcal F'
    =
    \sigma^{-1}({\rm Sing}^+\mathcal F),
\]
it follows from \cite[Lemma 2.9]{CS20} that $Z'$ is
$\mathcal F'$-invariant. Hence,  by \cite[Lemma 4.2]{CS23}, $Z$ is $\mathcal F$-invariant. In particular, $K_{\mathcal F}$
is Cartier at the general point of $Z$, i.e.  the generic
point of $Z$ lies in $U$.

\medskip 

We now work locally near a general point of $Z$. Choose an affine open
subset
\(
    W\subset \Omega
\)
such that $W\cap Z\neq \emptyset$, $L|_W$ is trivial, and
$T_{\mathcal F}|_W$ is generated by a vector field $\partial$. Let
\(
    \mathcal I:=\mathcal I_{Z\cap W}
\)
be the ideal sheaf of $Z\cap W$ in $W$. After choosing a trivialisation
of $L|_W$, write
\[
    s|_W=\widetilde{s}\in \mathcal O_W .
\]
Since $Z\subset A$, we have
\(
    \widetilde{s}\in \mathcal I.
\)
Since $Z$ is $\mathcal F$-invariant, we have
\(
    \partial(\mathcal I)\subset \mathcal I.
\)
Therefore, by induction,
\[
    \partial^i(\widetilde{s})\in \mathcal I
    \qquad
    \text{for all } i=0,1,\ldots,n.
\]

Under the induced local trivialisation of $J^n_{\mathcal F}L$ on $W$,
the foliated $n$-th jet of $s$ is
\[
    j^n_{\mathcal F}(s)|_W
    =
    \bigl(
        \widetilde{s},
        \partial(\widetilde{s}),
        \ldots,
        \partial^n(\widetilde{s})
    \bigr).
\]
All of these components lie in $\mathcal I$. Hence
\(
    j^n_{\mathcal F}(s)|_W
\)
vanishes along $Z\cap W$.

This contradicts the choice of $s$, since
$j^n_{\mathcal F}(s)$ is nowhere vanishing on $\Omega$. Therefore $A$ does not contain
any log canonical centre of $\mathcal F$ which is not contained in
${\rm Sing}^+\mathcal F$.
\end{proof}

\begin{proof}[Proof of Theorem \ref{t_uniqueness_min_models}]
By assumption, there exists a birational map $\phi\colon Y_1 \dashrightarrow Y_2$ such that $\phi \circ f_1 = f_2$.

We first show that $\phi$ is an isomorphism in codimension one. Let $g_1\colon Z\rightarrow Y_1$ and $g_2\colon Z\rightarrow Y_2$ be birational morphisms which resolve the indeterminacy locus 
of $\phi$. As $K_{\mathcal{F}_1}$ and $K_{\mathcal{F}_2}$ are both nef, the negativity lemma implies that $g_1^*K_{\mathcal{F}_1}=g_2^*K_{\mathcal{F}_2}$. 
Since $f_1$ and $f_2$ are $K_{\cal F}$-negative, it follows that $\exc f_1= \exc f_2$. Thus, $\phi$ is small (see also  \cite[Lemma 3.1]{JV23}). 
Let us assume by contradiction that $\phi$ is not an isomorphism.

\medskip 
We claim that  $\exc \phi$ does not contain any log canonical centre of $\cal F_1$. Indeed, 
let $V\subset Y_1$ be a log canonical centre of $\mathcal{F}_1$. After possibly replacing $Z$ by a higher model, we may assume that $g_1$ extracts an exceptional divisor $E$ centred on $V$ and such that $a(E,\cal F_1)=0$. If $V_X$ is the image of $E$ in 
$X$, then as $f_1$ is $K_{\cal F}$-negative and $\cal F$ is canonical, it follows that $a(E,\cal F)=0$. Thus, the negativity lemma (cf. \cite[Lemma 2.7]{CS21}) implies that $f_1$ is an  isomorphism at the generic point of  $V_X$. 
Since $\phi$ is small, we have that $E$ is  $g_2$-exceptional and since  $g_1^*K_{\mathcal{F}_1}=g_2^*K_{\mathcal{F}_2}$, we have that  $g_2(E)\subset Y_2$ is  a log canonical  centre for $\mathcal{F}_2$ and, similarly as above, we have that $f_2$ is an isomorphism at the generic point of $V_X$. 
Hence the birational map $\phi\colon Y_1\dashrightarrow Y_2$ is an isomorphism at the generic point  of $V$ and, therefore, our claim follows. 
Similarly, $\exc \phi^{-1}$ does not contain any log canonical centre of $\mathcal F_2$.
 	
\medskip

We now consider sufficiently general ample divisors $A_1$ and $A_2$ on $Y_1$ and $Y_2$ respectively, with  strict transforms $H_1$ on $Y_2$ and $H_2$ on $Y_1$ respectively. In particular, $A_1$ does not contain any $\cal F_1$-invariant subvariety which is not contained in ${\rm Sing}^+\cal{F}_1$ as in Theorem \ref{t_Bertini}, which implies that $H_1$ is not $\cal F_2$-invariant. Similarly, we can choose $A_2$ in this way.  Assume by contradiction that  $K_{\mathcal{F}_1}+ H_2$ and 
 $K_{\mathcal{F}_2}+ H_1$ are both nef, then $K_{\mathcal{F}_1}+ A_1+H_2$ and $K_{\mathcal{F}_2}+ A_2+ H_1$ are both ample divisors. Note that $\phi_*(K_{\mathcal{F}_1}+A_1+H_2)=K_{\mathcal{F}_2}+A_2+H_1$. We may write
				 \[
				 g_1^*(K_{\mathcal{F}_1}+ H_2+ A_1)-g_2^*(K_{\mathcal{F}_2}+ A_2+  H_1)=E_1-E_2,
				 \]
where $E_1$ and $E_2$ are effective $g_1$-exceptional divisors,  and hence also $g_2$-exceptional. The negativity lemma implies that $E_1=E_2$. In particular, $D:=g_1^*(K_{\mathcal{F}_1}+ H_2+ A_1)=g_2^*(K_{\mathcal{F}_2}+ A_2+  H_1)$ is a semi-ample divisor on $Z$. As $g_{1*}D$ and $g_{2*}D$ are both ample, it follows that $g_1$ and $g_2$  contract the same curves and, by the rigidity lemma, we have that  $\phi\colon Y_1\to Y_2$ is an isomorphism, contradicting our assumption. Thus, after possibly replacing $Y_1$ by $Y_2$ and $H_2$ by $H_1$, we may assume that $K_{\mathcal{F}_1}+ tH_2$ is not nef for any $0<t\ll 1$.

We now show that there exists $\epsilon >0$ such that $(\cal F_1,\epsilon H_2)$ is log canonical at every closed point of  $\exc \phi$. Let $p\in \exc \phi$ be a closed point. 
Since $\exc \phi$ does not contain any log canonical centre of $\mathcal{F}_1$ and  $\cal F_1$ is canonical, 
it follows that 
$\cal F_1$ is terminal at $p$.
Thus, by \cite[Lemma 2.9]{CS20}, there exists  an analytic open neighbourhood $V$ of $p$ and a quasi-\'etale cover $\pi\colon U\rightarrow V$ such that $\pi^{-1}\mathcal{F}_1|_U$ is induced by a holomorphic submersion $g\colon U\rightarrow W$, i.e. $\pi^{-1}\mathcal{F}_1|_U=T_{U/W}$. Let $\mathcal F'_1\coloneqq  \pi^{-1}\mathcal{F}_1$ and let 
$H'_2\coloneqq \pi^*H_2$. Then, \cite[Lemma 2.8]{CS20} implies that, for any closed point $q\in U$ and for any  $t>0$, we have that  $(\cal F_1, tH_2)$ is log canonical at $\pi(q)$ if and only if 
$(\mathcal F'_1,tH'_2)$ is log canonical at $q$. 
Let $L$ be a fibre of $g$ and assume by contradiction that $L$ is contained in the support of $H'_2$. Let $M$ be the Zariski closure of $\pi(L)$ in $Y_1$. Then $M$ is $\cal F_1$-invariant and it is contained in the support of $H_2$. In particular, by our choice of $A_2$, it follows that $M$ is contained in $\exc \phi$, a contradiction.
By \cite[Lemma 4.4]{CS23} we have that $(L,t H'_2|_L)$ is log canonical if and only if $(\mathcal{F}'_1, tH'_2)$ is log canonical in a neighbourhood of $L$ and our claim easily follows.

\medskip

Denote $D_1:= \epsilon H_2$. Then $K_{\mathcal{F}_1}+D_1$ is not nef and by the cone theorem (cf.  \cite[Theorem 4.8]{CS23}), there exists a curve 
$C$ on $Y_1$ such that $(K_{\cal F_1}+D_1)\cdot C<0$ and either $(\cal F_1,D_1)$ is not log canonical at the general point of  $C$ or $C$ is $\cal F_1$-invariant. 
Since $K_{\cal F_1}$ is nef, we have that $H_2\cdot C<0$. 
By the negativity lemma we may write $g_1^*H_2=g_2^*A_2+E$, where $E\ge 0$ is $g_2$-exceptional. As $Y_1$ and $Y_2$ are isomorphic in codimension one, we have that $E$ is also $g_1$-exceptional.  Moreover, by construction, we have that $g_1(\supp{E})$ is contained in $\exc \phi$.
Let $C'$ be 
a curve on $Z$ which maps onto $C$. Then $(g_2^*A_2+E)\cdot C'=g_1^*H_2\cdot C'<0$. Since $A_2$ is ample, it follows that $E\cdot C'<0$, 
which implies that $C$ is contained in $\exc \phi$. Thus, $(\cal F_1,D_1)$ is log canonical at the general point of  $C$ and therefore $C$ is $\cal F_1$-invariant. In particular, $C$ is a log canonical centre of $\cal F_1$ which is contained in $\exc \phi$, contradicting our claim above.
\end{proof}

\begin{remark}
	In Theorem \ref{t_uniqueness_min_models} it is essential that $Y_1$ and $Y_2$ are outputs of $K_{\mathcal{F}}$-MMPs starting from the same variety, i.e. it is not true that rank one foliated minimal models are unique in a birational class. For example, let $E$ be an elliptic curve, consider $Y_1:=\mathbb{P}^2\times E$ and the rank one foliation $\mathcal{F}_1$ defined by the morphism $\mathbb{P}^2\times E\rightarrow \mathbb{P}^2$. Now, consider $Y_2:=\mathbb{P}^1\times \mathbb{P}^1\times E$ and the rank one foliation  $\mathcal{F}_2$ defined by the morphism $Y_2\rightarrow \mathbb P^1\times \mathbb P^1$. Note that both $K_{\mathcal{F}_1}$ and $K_{\mathcal{F}_2}$ are nef and $Y_1$ and $Y_2$ are birational algebraic varieties, however, $Y_1$ and $Y_2$ are not isomorphic.

\end{remark}

\section{Existence of F-dlt flops for co-rank one foliations}
\label{s_co-rank_one_flops}
The goal of this section is to prove the existence of flops in several cases.

\subsection{F-dlt flops}

We begin with the proof of our second main theorem. 

\begin{proof}[Proof of Theorem \ref{thm_dlt_flop}]
We first observe that if $Z' \to Z$ is any \'etale neighbourhood of $f(C)$ then the pull-back of $\mathcal F$ to $X\times_ZZ'$ is also F-dlt.  Since the construction of the $D$-ample model is \'etale local on the base we may therefore freely replace $Z$ by any \'etale neighbourhood of $P\coloneqq f(C)$.

Let $\hat{Z}$ be the formal completion of $Z$ along $P$ so we have a projective morphism of formal schemes
$\hat{f}\colon \hat{X} \to \hat{Z}$.

\medskip

We now turn to the key point in this proof.  We will show there exists $j \in \{1, \dots, N\}$ such that 
the separatrix $S_j$ meets $C$ properly, i.e., $S_j \cap C \neq \emptyset$ and $C \not\subset S_j$, and in particular $S_j\cdot C>0$.
We first prove the following:

\begin{claim}
\label{claim_2}
We have that  $\sing \mathcal F \cap C \neq \emptyset$.
Moreover,  if there exists a component of $\sing \mathcal F \cap S_1$ distinct from $C$, then there exists a separatrix $S_j$
    meeting $C$ properly. 
\end{claim}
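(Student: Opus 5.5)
Both assertions come from comparing degrees along $C$ inside the surface $S_1$. The first uses the adjunction formula of \cite[Corollary 3.20]{CS21}. The second uses the fact that singular curves of $\mathcal F$ lying in an invariant surface are cut out there by the other separatrices through them.

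\textbf{First assertion.} Suppose, for contradiction, that $C\cap \sing \mathcal F=\emptyset$. Then $\mathcal F$ is a smooth foliation in a neighbourhood of $C$. Since $C\subset S_1$ is tangent to $\mathcal F$, the separatrix through $C$ is the leaf $S_1$, which is a smooth formal surface near $C$. Moreover $S_1$ is the only separatrix meeting $C$, so $N=1$. Along $S_1$ the normal bundle of the leaf is computed by the Bott connection, and adjunction gives
\[
K_{\mathcal F}|_{S_1}=K_{S_1},\qquad (K_{\hat X}+S_1)|_{S_1}=K_{S_1}.
\]
Hence
\[
K_{\hat X}\cdot C=(K_{\mathcal F}-S_1)\cdot C=-S_1\cdot C>0,
\]
where we use $K_{\mathcal F}\cdot C=0$ and the hypothesis that $-S_1$ is $f$-ample.

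On the other hand, $C$ lies in a smooth leaf on which the foliation is regular. The restricted foliation $\mathcal F|_{S_1}$ is then the whole tangent sheaf of $S_1$, and $C$ is a curve with $C^2<0$ in $S_1$, because $S_1\cdot C<0$ and $S_1$ is a leaf. I would aim for the contradiction through Camacho--Sad: the normal bundle $N_{S_1}|_C=\mathcal O(S_1)|_C$ carries a flat (Bott) partial connection along $C$, so $\deg N_{S_1}|_C=0$. This contradicts $S_1\cdot C<0$. So I expect the first assertion to follow simply from the Bott vanishing $S_1\cdot C=0$ for a leaf containing $C$ on which $\mathcal F$ is regular.

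\textbf{Second assertion.} Let $\Gamma\neq C$ be a component of $\sing\mathcal F\cap S_1$. Since $\sing \mathcal F$ is invariant and $\mathcal F$ is F-dlt, \cite[Lemma 3.8]{CS21} shows that $\mathcal F$ is simple at the general point of $\Gamma$. Near such a point there are exactly two (formal) separatrices, crossing along $\Gamma$. One of them is $S_1$; call the other $S'$. By the prolongation of separatrices recalled in the subsection on extending separatrices (\cite[Lemma 3.5]{CS21}), $S'$ extends to a formal separatrix of $\hat X$. Since $\Gamma$ lies in the formal neighbourhood of $C$ and meets $C$, the separatrix $S'$ meets $C$, so $S'=S_j$ for some $j$. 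It remains to see that $C\not\subset S_j$. If $C$ is not in $\sing \mathcal F$, the separatrix through a general point of $C$ is unique and equals $S_1\neq S_j$. If $C\subset \sing\mathcal F$, then at $\Gamma\cap C$ the three invariant curves $C$, $\Gamma$ and the curve $S_j\cap S_1$ sit in $S_1$. In that case I would use non-dicriticality to rule out $S_j\supset C$: this would force three separatrices through the point, which I would contradict via the local F-dlt normal forms. Then $S_j\cdot C>0$ by properness.

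\textbf{Main obstacle.} The hardest step will be handling the case of the second assertion where $\Gamma$ meets $C$ at a point where $C$ itself is singular for $\mathcal F$. There I would argue that, on $S_1$, the singular curves of $\mathcal F$ are exactly the intersections of $S_1$ with the other separatrices.
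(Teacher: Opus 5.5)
Your first assertion is fine and is the paper's argument. If $\mathcal F$ is regular along $C$, then $S_1$ is the only separatrix meeting $C$, and Camacho--Sad (your Bott-connection argument) gives $S_1\cdot C=0$, which contradicts the $f$-ampleness of $-S_1$. The adjunction detour is not needed, and your remark that $C^2<0$ in $S_1$ is unjustified, though you never use it.

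The gap is in the second assertion. A minor point first: you take ``the other'' local separatrix $S'$ along $\Gamma$ and later use $S_j\neq S_1$. This needs hypothesis (2), since a non-normal $S_1$ could contain both local branches along $\Gamma$.

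The real problem is the case you flag as the main obstacle. You plan to show that $C\subset S_j$ forces three separatrices through a point $x\in C\cap\Gamma$, and then to contradict this using the local F-dlt normal forms. No such contradiction exists. An F-dlt foliation can be simple at $x$ with three smooth separatrices $\{z_1=0\},\{z_2=0\},\{z_3=0\}$ crossing normally, for example $\omega=z_1z_2z_3\sum_i\lambda_i\,dz_i/z_i$ with non-resonant $\lambda_i$. This is exactly what happens at a zero-dimensional lc centre, and non-dicriticality does not exclude it either.

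The paper instead uses this configuration positively:
\begin{enumerate}
\item Since $C\cup\Gamma\subset S_1\cap S_j$, the point $x$ is an lc centre of $(\hat X,\sum S_k)$, hence of $\mathcal F$ by \cite[Lemma 8.14]{spicer20}.
\item By \cite[Lemma 3.8]{CS21}, $\mathcal F$ is simple at $x$, so there are formal coordinates in which the three separatrices at $x$ are the coordinate planes. Normality of $S_1$ lets you take $S_1=\{z_1=0\}$.
\item Either $C$ is smooth at $x$, with $C=\{z_1=z_2=0\}$; then the third plane $\{z_3=0\}$ is a separatrix meeting $C$ properly, which is the desired conclusion rather than a contradiction.
\item Or $C$ has two formal branches $\{z_1=z_2z_3=0\}$ at $x$; then $C$ already fills up $\operatorname{Sing}\mathcal F\cap S_1$ near $x$, contradicting $\Gamma\neq C$.
\end{enumerate}
Your proposal never considers that $C$ may be singular at $x$. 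Your heuristic that singular curves in $S_1$ are cut out by the other separatrices does not separate these two cases. So the key step of the second assertion is missing.
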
  
\begin{proof}[Proof of Claim]
    Suppose for sake of contradiction that $\sing \mathcal F \cap C = \emptyset$.
    Then there is at most one separatrix meeting $C$. Indeed, if there were two separatrices, $S$ and $S'$, at some point $x \in C$, then $x \in S \cap S' \subset \sing \mathcal F$.  Thus, $S_1$ is the unique separatrix meeting $C$.  By the Camacho-Sad formula (cf. \cite[Theorem 3.9]{P22}) it follows that $S_1|_{S_1} \equiv 0$, and therefore $S_1$ is $f$-numerically trivial, contrary to hypothesis.

    \medskip

    To verify the second part of the claim suppose that $\Sigma$ is  a component of $\sing \mathcal F \cap S_1$ which meets $C$ at a point $x \in C$ and $\Sigma \neq C$.  Since $\mathcal F$ has F-dlt singularities, it has simple singularities at a general point of $\Sigma$ and so we may find two (local) separatrices $S$ and $S'$ at a general point of $\Sigma$. 
    Since $C \cup \Sigma$ is tangent to $\mathcal F$ we may extend $S$ and $S'$ to a (formal) neighbourhood of $C$ and we may assume that $S \cap C \neq \emptyset$ and $S' \cap C \neq \emptyset$.
    
    One of these separatrices, say $S$, must be equal to $S_1$, and since $S_1$ is normal $S'$ is distinct from $S_1$, and so up to relabelling we may assume that $S' = S_2$.   We claim that $S_2$ does not contain $C$.

Assume by contradiction that $C\subset S_2$. We first show that $\mathcal F$ has simple singularities in a neighbourhood of $x \in X$.  Indeed, the point $x$
is an lc centre of $(X, \sum S_k)$
and hence \cite[Lemma 8.14]{spicer20} (and its proof) imply that $x$ is an lc centre of $\mathcal F$.  Since $\mathcal F$ is F-dlt, \cite[Lemma 3.8]{CS21} implies that $\mathcal F$ has simple singularities in a neighbourhood of $x$ (and in particular, $x \in X$ is a smooth point).  It follows that in a  neighbourhood of $x \in X$ we may find (formal) coordinates $z_1, z_2, z_3$ such that $\{z_1 = 0\}\cup \{z_2 = 0\}\cup \{z_3 = 0\}$ are the (local) separatrices of $\mathcal F$
at $x \in X$.  Since $S_1$ is normal, up to relabelling we may assume that $S_1 = \{z_1 = 0\}$.  
Therefore, near \(x\), either \(C\) is smooth and is given by
\(\{z_1=z_2=0\}\), or \(C\) has two formal branches at \(x\) and is
given by \(\{z_1=z_2z_3=0\}\). In the former case, $\{z_3 = 0\}$ is a local separatrix which, when extended to a neighbourhood of $C$, meets $C$ properly.  In the latter case, we see that in fact $C = \sing \mathcal F \cap S_1$ near $x$, contrary to our hypothesis that $\Sigma$ was distinct from $C$.
\end{proof}

Suppose for sake of contradiction that there does not exist a separatrix $S_j$ meeting $C$ properly. 
By Claim \ref{claim_2}
we see that it must be the case that $\sing \mathcal F = C$ in a neighbourhood of $C$ and that for any 
   $j \neq 1$, we have  $S_1 \cap S_j \subset C$.
   Since $\mathcal F$ has F-dlt singularities, it has simple singularities at a general point of $C$ and so, it follows that there are at most two separatrices at a general point of $C$ and thus we see that the set of separatrices meeting $C$ consists of exactly two separatrices $S_1$ and $S_2$.

 There are two possibilities for the transverse type of the foliation singularity along $C$ (cf. \cite[pp. 2-3]{brunella00}).  Either
	\begin{itemize}
		\item the transverse type of the foliation singularity along $C$ is logarithmic; or 
\item the transverse type of the singularity along $C$
	is of saddle node type.
	\end{itemize}

Suppose for the moment  that the transverse type of the foliation singularity along $C$ is of saddle node type.  We claim in this case that $S_1$ is necessarily the strong separatrix.  Suppose for sake of contradiction that $S_1$ is the weak (formal) separatrix of the foliation singularity along $C$. 
In this case, by foliation adjunction we may write
	\[K_{\mathcal F}\vert_{S_1} = K_{S_1}+\alpha C\] where $\alpha > 1$ depends only on the multiplicity of the saddle node.  Since $f$ is a flopping contraction 
	we see that 
    \begin{align}
    \label{flop_eq}
    (K_{S_1}+\alpha C)\cdot C = 0.
    \end{align}
Let $S\coloneqq S_1+S_2$. By Lemma \ref{lem_nef_comparison_lem},
we have that 
\[
(K_X+S)\cdot C \leq K_{\mathcal F}\cdot C = 0.
\]
	Since \[(K_X+S)\vert_{S_1} = K_{S_1}+C\] 
	we deduce that 
    \begin{align}
    \label{flop_ineq}
        (K_{S_1}+C)\cdot C \leq 0.
    \end{align}
By taking the difference of \eqref{flop_eq} and \eqref{flop_ineq} we see that 
	\[(\alpha-1)C\cdot C \geq 0\]
 which contradicts the fact that $C \subset S_1$ is contractible. This is our sought after contradiction, and so $S_1$ must be the strong separatrix.

    \medskip

So we may assume that either the transverse type of the foliation singularity at $C$ is of logarithmic type, or, that the transverse type of the foliation is saddle node type and $S_1$ is the strong separatrix.
By applying
	the Camacho-Sad formula 
	 we may write \[S_1\vert_{S_1} \equiv aC\] with 
	$a = {\rm CS}(\mathcal F\vert_H, S_1\cap H, C\cap H)$ where $H$ is a general hyperplane section passing through a general point of $C$
    (cf. \cite[Theorem 3.9]{P22}).  We note that by \cite[pp. 30-31]{brunella00} $a \notin \mathbb Q_{>0}$.

On the other hand, 	since $C$ is contractible and since $-S_1\vert_{S_1}$ is a $\mathbb Q$-Cartier and $f$-ample divisor, the negativity lemma, \cite[Lemma 3.39]{km98}, implies that $a \in \mathbb Q_{>0}$.
This is our sought after contradiction.

\medskip

We have therefore found a separatrix which meets $C$ properly.  Up to relabelling we may assume that this separatrix is $S_2$.
By \cite[Lemma 8.14]{spicer20} (see also the paragraph after \cite[Lemma 3.16]{CS21}) the pair $(\hat{X}, S\coloneqq  \sum_{i = 1}^N S_i)$ is log canonical and by Lemma \ref{lem_nef_comparison_lem}, we have that  $-(K_{\hat X}+S)$ is $f$-nef.
 Since $S_2 \cdot C>0$ it follows that for  $0<\epsilon \ll \eta \ll1 $, the pair $(\hat{X}, \Delta \coloneqq (1-\epsilon)(S - \eta S_2))$ is klt and $-(K_{\hat X}+\Delta)$ is $f$-ample.

Following \cite[\S 5]{CS21}, up to replacing $Z$ by an \'etale neighbourhood we may approximate $\Delta$ by a divisor $\tilde{\Delta}$ on $X$ such that $(X, \tilde{\Delta})$ is klt and $-(K_X+\tilde{\Delta})$ is $f$-ample.
We may then apply \cite[Corollary 1.3.1]{BCHM06} to produce the $D$-ample model, as required.
\end{proof}

\begin{remark}
\label{rem_cs_singular}
    In fact, the above argument works in slightly greater generality.  It suffices to assume that $X$ is analytically $\mathbb Q$-factorial (hence $S_1, \dots, S_N$ are $\mathbb Q$-Cartier).  The Camacho-Sad formula used above (\cite[Theorem 3.9]{P22}) is stated for smooth complex varieties but the proof easily adapts to the setting where $X$ has quotient singularities in codimension two, cf. \cite[the proof of Proposition 3.12]{DO19}.
\end{remark}


\begin{remark}
\label{dlt_flop_nonexist}
We remark that Theorem \ref{thm_dlt_flop} is perhaps surprising. Flops
for dlt pairs do not always exist (cf. \cite[Example 7.7]{Fujino15}).
In Fujino's example, the flopping curve has genus one, and a certain
non-torsion line bundle on the curve obstructs the finite generation of
the relevant algebra. The proof of Theorem \ref{thm_dlt_flop} shows that (under the assumption of the Theorem)
this phenomenon cannot occur for co-rank one foliations. Indeed, in this case the flopping curve is always a smooth
rational curve.
\end{remark}

\subsection{Producing klt flops}

\begin{proof}[Proof of Theorem \ref{thm_flop_klt}]
We first show that $\exc f$ is tangent to $\mathcal F$. Let $C$ be an irreducible component of $\exc f$.
Suppose for sake of contradiction that $C$ is not tangent to $\mathcal F$.
Since $C\subset \exc f$ and since $\mathcal F$ is log terminal at the generic point of $C$,  there exists a $\mathbb Q$-Cartier $\mathbb Q$-divisor $G \geq 0$ such that $G\cdot C<0$ and such that $C$ is a log
canonical centre of $(\mathcal F, G)$.  On one hand, we have $(K_{\mathcal F}+G)\cdot C<0$. On the other hand, since $C$ is not tangent to $\mathcal F$, by
\cite[Theorem 4.5]{spicer20}, we have that $(K_{\mathcal F}+G)\cdot C \geq 0$, a contradiction.

We now claim that we may freely replace $Z$ by an \'etale neighbourhood of $f(\exc f)$ and replace $X$ by a small $\mathbb Q$-factorialisation, so that if $S_1, \dots, S_N \subset \hat{X}$ are all the separatrices meeting $\exc f$ (where $\hat{X}$ is the formal completion of $X$ along $\exc f$), then $S_i$ is $\mathbb Q$-Cartier.

By the arguments in \cite[\S 4 and \S 5]{CS21}, up to replacing $Z$ by an \'etale neighbourhood of $P\coloneqq f(\exc f)$ we may assume that there exist $m>0$ and rank one reflexive sheaves $M_1, \dots, M_N$ on $X$ so that $M_i|_{\hat{X}} \cong \mathcal O_{\hat{X}}(mS_i)$.

Let $\mu\colon Y \to X$ be a small $\mathbb Q$-factorialisation of $X$ which is guaranteed to exist by \cite[Corollary 1.4.3]{BCHM06}.
Let $\tilde{Z} := \Spec \widehat{\mathcal O_{Z, P}}$, let $\tilde{X} := X\times_Z\tilde{Z}$ and let $\tilde{Y} := Y\times_Z\tilde{Z}$.
By the Grothendieck existence theorem $S_i$ corresponds to a divisor on $\tilde{X}$, which (by abuse of notation) we will continue to denote $S_i$ and which 
continues to satisfy $M_i|_{\tilde{X}} \cong \mathcal O_{\tilde{X}}(mS_i)$.  Let $M^Y_i$ (resp. $S^Y_i$) denote the strict transform of $M_i$ (resp. $S_i$) on $Y$ (resp. on $\tilde{Y}$).
Since $\tilde{Y}$ and $\tilde{X}$ are isomorphic away from a set of codimension at least $2$, and since reflexive sheaves are uniquely determined away from subsets of codimension at least $2$, 
we deduce that $M^Y_i|_{\tilde{Y}} \cong \mathcal O_{\tilde{Y}}(mS^Y_i)$.  Since $Y$ is $\mathbb Q$-factorial we see that $M^Y_i$ is $\mathbb Q$-Cartier, and hence
$S^Y_i$ and $S^Y_i|_{\hat{Y}}$ are $\mathbb Q$-Cartier where $\hat{Y}$ is the formal completion of $Y$ along $\exc{(f\circ\mu)}$.

Finally, we claim that any separatrix (formal or otherwise) of $\mu^{-1}\mathcal F$ meeting $\exc{(f\circ\mu)}$ is in fact equal to $S_i^Y$ for some $i \in \{1, \dots, N\}$.
Let us denote by $\hat{\mu}$ the morphism $\hat{Y} \to \hat{X}$ induced by $\mu$.
We claim that if $T$ is a reduced irreducible codimension-one subscheme of $\hat{Y}$ which is invariant under $\mu^{-1}\mathcal F|_{\hat{Y}}$ then $\hat{\mu}_*T$ is a codimension one reduced and irreducible subscheme of $\hat{X}$ which is again invariant under $\mathcal F|_{\hat{X}}$.  To prove this latter claim, first note that invariance of a divisor may be checked away from a closed subset not equal to the divisor.  So,
by considering $\hat{\mu}_*T$ as a divisor on $\tilde{X}$, and noting that $\mu$ is an isomorphism away from $\exc \mu$,  it follows that 
$\hat{\mu}_*T$ is invariant away from $\mu(\exc \mu)$ and hence $\hat{\mu}_*T$ is invariant.   Thus, $\hat{\mu}_*T$ is a separatrix meeting $\exc f$ and hence it is equal to $S_i$ for some $i \in \{1, \dots, N\}$ and hence $T = S_i^Y$.

So up to replacing $X$ by $Y$ we may assume that $S_i$ is $\mathbb Q$-Cartier.
Set $S:= \sum_{i = 1}^N S_i$.

Since $(\mathcal F, \Delta)$ is klt we see that there are no one dimensional components of $\sing \mathcal F$ (see for instance the discrepancy calculation in the proof of \cite[Lemma 3.3]{CS21}),  and so by the Camacho-Sad formula \cite[Theorem 3.9]{P22} (cf. also Remark \ref{rem_cs_singular})  we deduce 
that for any $i$, we have that $S_i|_{S_i} \equiv_{f} 0$.  From this we deduce that $S$ is $f$-nef.

By \cite[Lemma 8.14]{spicer20} (see also the paragraph after \cite[Lemma 3.16]{CS21})  $(\hat{X}, \Delta+S)$ is log terminal and so $(\hat{X}, \Gamma \coloneqq \Delta+S-\epsilon S)$ is klt for any $0<\epsilon <1$.  Lemma \ref{lem_nef_comparison_lem} implies that $-(K_{\hat{X}}+\Delta+S)$ is $f$-nef. Since $S$ is $f$-nef we then see that $-(K_{\hat{X}}+\Delta+S-\epsilon S)$ is $f$-nef.  As in the last paragraph of the proof of Theorem \ref{thm_dlt_flop}, by approximating $\Gamma$, and perhaps perturbing it slightly, we may produce a divisor $\tilde{\Gamma}$
so that 
$(X, \tilde{\Gamma})$ is klt and $-(K_X+\tilde{\Gamma})$ is $f$-ample.  We apply \cite[Corollary 1.3.1]{BCHM06} to produce the $D$-ample model, which is our required flop.
\end{proof}

\section{Examples}

The goal of this section is to collect some examples which explore the failure of the basepoint free
theorem for rank one foliations.
As shown in \cite{mcq08}, if $X$ is a surface with a rank one foliation $\cal F$ with canonical singularities
such that $K_{\cal F}$ is big and nef, it is not necessarily the case that $K_{\cal F}$ is semi-ample, the obstruction being given
by the presence of elliptic Gorenstein leaves (e.g.l.'s) contained in $\Null K_{\cal F}$.
We produce examples of similar behaviour in higher dimensions.

An important new feature arises in higher dimensions with regard to this phenomenon: on surfaces 
it is always possible to contract e.g.l.'s in the category of algebraic spaces, however 
if $X$ is a projective threefold and $\cal F$ is a rank one foliation on $X$ with canonical singularities, 
it may not be possible to contract $\Null K_{\cal F}$, even in the category of algebraic spaces.

\subsection{Hilbert modular surfaces}
\label{s_setup}
Let $X$ be a smooth Hilbert modular surface, obtained as the resolution of the cusps of a Baily-Borel compactification of a bi-disc quotient and equipped with
two  tautological foliations $\cal F$ and $\cal G$ (e.g. see \cite{mcq08} and \cite[Example 9.4]{brunella00}).
Let $E$ denote the e.g.l. in $X$, i.e. the pre-image of the cusp.
Recall that $E$ is either a cycle
of rational curves  or a nodal rational curve. We will assume below that $E$ is a nodal rational curve. 

We have:
\begin{enumerate}
\item $\cal F$ and $\cal G$ have canonical singularities; 
\item $K_{\cal F}$ and $K_{\cal G}$ are nef divisors such that 
\[H^0(X,\cal O_X(mK_{\cal F}))=H^0(X,\cal O_X(mK_{\cal G}))=0
\]
for all positive integers $m$;
\item $E$ is $\mathcal F$-invariant and $\cal G$-invariant;

\item $K_{\cal F}\cdot E = K_{\cal G}\cdot E= 0$; and

\item $K_{\cal F}\vert_E$ and $K_{\cal G}\vert_E$ are not torsion.
\end{enumerate}
Moreover, we have that $h^1(X,\cal O_X)=0$ and 
\[
\Omega^1_X(\log E) \simeq \cal O_X(K_{\cal F}) \oplus \cal O_X(K_{\cal G})
\]
and therefore 
\[K_{\cal F}+K_{\cal G}\sim K_X+E.
\]
We collect some easy calculations:

\begin{lemma}
\label{lem_hmf_calcs}
Set-up as above.

Then, for every positive integer $m$, we have
\[H^0(X, \Omega^1_X(\log E) \otimes \cal O_X(mK_{\cal F})) \neq 0.\]

\end{lemma}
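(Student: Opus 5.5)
The plan is to use the splitting
\[
\Omega^1_X(\log E)\simeq \mathcal O_X(K_{\mathcal F})\oplus \mathcal O_X(K_{\mathcal G}),
\]
and to twist it by $\mathcal O_X(mK_{\mathcal F})$. This gives
\[
H^0(X,\Omega^1_X(\log E)\otimes \mathcal O_X(mK_{\mathcal F}))\simeq H^0(X,\mathcal O_X((m+1)K_{\mathcal F}))\oplus H^0(X,\mathcal O_X(K_{\mathcal G}+mK_{\mathcal F})).
\]
By property (2), the first summand vanishes. So everything reduces to showing $H^0(X,\mathcal O_X(K_{\mathcal G}+mK_{\mathcal F}))\neq 0$ for all $m\ge 1$.

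To get sections I will use Riemann--Roch together with vanishing of $h^2$. Serre duality gives
\[
h^2(K_{\mathcal G}+mK_{\mathcal F})=h^0(K_X-K_{\mathcal G}-mK_{\mathcal F}).
\]
Using $K_X\sim K_{\mathcal F}+K_{\mathcal G}-E$, this equals $h^0(-(m-1)K_{\mathcal F}-E)$. The divisor $K_{\mathcal F}$ is nef and nonzero, since $K_{\mathcal F}+K_{\mathcal G}$ is big for a Hilbert modular surface. Its intersection with an ample class $H$ is therefore positive, and so $(-(m-1)K_{\mathcal F}-E)\cdot H<0$. Hence the $h^0$ vanishes. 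Riemann--Roch then gives
\[
h^0\ge \chi(\mathcal O_X)+\tfrac12 D\cdot(D-K_X), \qquad D=K_{\mathcal G}+mK_{\mathcal F}.
\]
For the quadratic term I need the standard intersection numbers: $K_{\mathcal F}^2=K_{\mathcal G}^2=0$, which holds because the tautological foliations are non-abundant with numerical dimension one, and $K_{\mathcal F}\cdot K_{\mathcal G}>0$, which holds since $(K_{\mathcal F}+K_{\mathcal G})^2>0$. With $D-K_X=(m-1)K_{\mathcal F}+E$, I compute
\[
D\cdot(D-K_X)=(m-1)K_{\mathcal F}\cdot K_{\mathcal G}+K_{\mathcal G}\cdot E+m(m-1)K_{\mathcal F}^2+mK_{\mathcal F}\cdot E=(m-1)K_{\mathcal F}\cdot K_{\mathcal G},
\]
where $K_{\mathcal G}\cdot E=K_{\mathcal F}\cdot E=0$ by property (4).

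For $m\ge 2$ this is positive, and it combines with $\chi(\mathcal O_X)=1+p_g\ge 1$ (using $h^1(\mathcal O_X)=0$) to give $h^0>0$.

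For $m=1$, the same computation gives $h^0(K_{\mathcal F}+K_{\mathcal G})\ge\chi(\mathcal O_X)\ge1$. Alternatively, $K_{\mathcal F}+K_{\mathcal G}\sim K_X+E$, and $H^0(K_X+E)$ has dimension $p_g+1$ by the residue sequence, because $E$ is connected and $h^1(\mathcal O_X)=0$. Either way it is nonzero.

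The main obstacle is justifying $K_{\mathcal F}^2=0$ and $K_{\mathcal F}\cdot K_{\mathcal G}>0$ from the listed properties. I would derive them as follows. First, $K_{\mathcal F}^2=0$: if $K_{\mathcal F}^2>0$, then $K_{\mathcal F}$ would be nef and big, so it would have sections for some $m$, contradicting (2). Second, the positivity: $K_{\mathcal F}+K_{\mathcal G}=K_X+E$ is big because the bidisc quotient is of log general type, which gives $(K_{\mathcal F}+K_{\mathcal G})^2>0$, hence $2K_{\mathcal F}\cdot K_{\mathcal G}>0$.
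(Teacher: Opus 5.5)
Your proof is correct and matches the paper's step for step. Like the paper, you use the splitting and property (2) to reduce to $H^0(X,\mathcal O_X(K_X+E+(m-1)K_{\mathcal F}))$, kill $h^2$ by Serre duality, and conclude by Riemann--Roch with $\chi(\mathcal O_X)\ge 1$. The only difference is that you evaluate the quadratic term exactly via $K_{\mathcal F}^2=K_{\mathcal G}^2=0$ and bigness of $K_X+E$, while the paper only bounds it below. In fact neither route needs strict positivity: nefness of $K_{\mathcal F}$ and $K_{\mathcal G}$ together with property (4) already makes the term nonnegative, so the step you flagged as the main obstacle can be skipped.
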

\begin{proof}
Since $H^0(X, \cal O_X((m+1)K_{\cal F})) = 0$ and
$K_{\cal F}+K_{\cal G} \sim K_X+E$, we have that 
\[H^0(X, \Omega^1_X(\log E) \otimes \cal O_X(mK_{\cal F})) \simeq H^0(X, \cal O_X(K_X+E+(m-1)K_{\cal F})).\]

By Serre duality and since $K_{\cal F}$ is pseudo-effective and not numerically trivial, we have that 
\[
h^2(X, \cal O_X(K_X+E+(m-1)K_{\cal F})) = h^0(X, \cal O_X(-E-(m-1)K_{\cal F})) = 0.
\]
Since $h^{1}(X, \cal O_X) = 0$, we have that $\chi(X, \cal O_X) >0$.
By Riemann-Roch, we then see that 
\begin{align*}
h^0(X, \cal O_X(K_X+E+(m-1)K_{\cal F})) \geq \chi(X, \cal O_X(K_X+E+(m-1)K_{\cal F}))  \\
=\chi(X, \cal O_X) + \frac{(K_X+E+(m-1)K_{\cal F})\cdot (E+(m-1)K_{\cal F})}{2} > 0
\end{align*}
where the second inequality holds because $K_{\cal F}$ is nef, $(K_X+E)\cdot E=0$ 
and 
\[
K_{\cal F}\cdot (K_X+E) = K_{\cal F}\cdot (K_{\cal F}+K_{\cal G}) > 0.
\]
Thus, our claim follows. 
\end{proof}

\subsection{A rank one foliation which does not admit a flop}
\label{s_unfloppable}
We use the same set-up as in \S \ref{s_setup}. We denote by $\cal N_{\cal F}\coloneqq(T_X/T_{\cal F})^{**}$ the normal sheaf associated to $\cal F$. Since $(K_X+E)\cdot E=0$ and $E^2<0$, it follows that $K_X\cdot E>0$ and therefore $\cal N^*_{\cal F}\vert_E$ is ample.  Let $p\colon Y \coloneqq \bb V(\cal N_{\cal F}) \rightarrow X$ be the total space of the line bundle associated to $\cal N_{\cal F}$
and identify $X$ with the zero section $S \hookrightarrow Y$, so that the normal sheaf $\cal N_{S/Y}$ of $S$ in $Y$ coincides with $\cal N_{\cal F}$.
We will continue to denote by $E$ its image in the zero section.
Since $E^2<0$ in $X$ and $E\cdot S<0$, it follows that the normal sheaf $\cal N_{E/Y}$ is anti-ample and so, by \cite{artin70}, there exists a birational morphism $f\colon Y \rightarrow Z$ onto an algebraic space $Z$  which contracts $E$ to a point. 

\medskip 

We now want to construct a foliation $\cal H$ on $Y$ such that $S$ is $\cal H$-invariant and the restriction of $\cal H$ to $S$ coincides with $\cal F$. Let $X_0\coloneqq X\setminus \sing \cal F$. It is enough to construct an extension of $\cal F$ on $p^{-1}(X_0)$ and then consider its extension on $Y$. 
Let $\{U_i\}$ be a Zariski open cover of $X_0$ so that $\cal F$ is defined by a   $1$-form $\omega_i$ on $U_i$. Note that $\{\omega_i\}$ defines a non-zero element of $H^0(X, \Omega_X^1\otimes \cal N_{\cal F})$.
We have $p^{-1}(U_i)  \simeq U_i \times \bb C$ and we denote by $z_i$ the corresponding coordinate on $\bb C$.
Let $h_{ij}$ be the transition function of the line bundle associated to $\cal N_{\cal F}$, with respect to the open cover $\{U_i\}$, so that $\omega_{i} = h_{ij}\omega_j$  and  $z_i = h_{ij}z_j$, for all $i,j$. 

We want to define a collection of  $2$-forms $\Omega_i$ on $p^{-1}(U_i)$  
such that $\Omega_i = h_{ij}^2\Omega_j$, for all $i,j$  and which define a rank one foliation on $p^{-1}(X_0)$.

 On $p^{-1}(U_i)\simeq U_i\times \mathbb C$, we define 
\[\Omega_i \coloneqq z_i^2d\left(\frac{\omega_i}{z_i}\right)\]

It follows by a direct computation that $\Omega_i=h_{ij}^2\Omega_j$ and, since the $2$-forms $\Omega_i$ are locally decomposable, they define a rank one foliation on $p^{-1}(X_0)$, which extends to a foliation $\cal H$ on $Y$.
It follows easily that:

\begin{enumerate}
\item $K_{\cal H} = p^*K_{\cal F}$;

\item $S$ is $\cal H$-invariant and the restriction of $\cal H$  to $S$  (cf. \cite[Proposition-Definition 3.7]{CS23}) coincides with $\cal F$;

\item $E$ is $\cal H$-invariant; and

\item $K_{\cal H}\cdot E = 0$ but $K_{\cal H}\vert_E$ is not torsion.
\end{enumerate}

Observe that $\cal H$ restricted to $S$ has canonical singularities, and so by inversion of adjunction (cf. \cite[Theorem 4.12]{SSV25})
and \cite[Fact III.i.3]{MP13}
it follows that $\cal H$ has canonical singularities.

Perhaps replacing $Z$ by an affine \'etale neighbourhood of $f(E)$ (and recompactifying after a finite cover
if desired) we have produced an example of a flopping contraction where $K_{\cal H}$ is not torsion
on the flopping curve.

\medskip

We now consider an example of a $D$-flopping situation.
We denote 
\[
M \coloneqq \cal O_X(-E)\vert_E \quad \text{and} \quad L \coloneqq \cal O_X(K_{\cal G})\vert_E.
\]
Note that $M$ is ample and $L$ is of degree zero but not torsion. 

Let $\widehat{Y}$ denote the formal completion of $Y$ along $E$, $\widehat X$ the formal completion of $X$ along $E$ and  $\widehat{Z}$  the formal completion
of $Z$ along $f(E)$. Let $\widehat f\colon \widehat Y\to \widehat Z$ be the induced morphism.

Let $D \coloneqq p^*E$.
The existence of the $D$-flop is equivalent to the finite generation of the  $\cal O_{\widehat Z}$-algebra
\[
\cal A \coloneqq \bigoplus_{m\ge 0} \widehat f_*\cal O_{\widehat{Y}}(mD).
\]

Since $\cal N^*_{\cal F}(E) \simeq \cal O_X(K_{\cal G})$, we have that $\cal N_{\cal F}\vert_E \simeq M^* \otimes L^*$. Thus, 
\[
\begin{aligned}
\cal N_{E/Y} &\simeq \cal O_Y(D)\vert_E \oplus \cal O_Y(S)\vert_E \\
&\simeq \cal O_X(E)\vert_E \oplus \cal N_{\cal F}\vert_E\\
&\simeq   M^*\oplus (M^*\otimes L^*).
\end{aligned}
\]

Now let $Y_n$ denote the $n$-th infinitesimal neighbourhood of $E$ in $Y$ and, for each $n\ge 1$, consider the exact sequence
\begin{align}
\label{ses_egl}
    0 \rightarrow {\rm Sym}^n(M\oplus (M\otimes L)) \rightarrow \cal O_{Y_n} \rightarrow \cal O_{Y_{n-1}} \rightarrow 0.
\end{align}
Observe that $E$ is Gorenstein and $\omega_E = \cal O_E$.
Thus, 
\[h^1(E, {\rm Sym}^n(M\oplus (M\otimes L))) = h^0(E, {\rm Sym}^n(M\oplus (M\otimes L))^*) = 0,\]
where this latter equality holds because $M$ and $M\otimes L$ are ample.
It follows that the short exact sequence \eqref{ses_egl} splits for all $n \ge 1$ and so by induction on $n$ we have   that $\cal O_{\widehat{Y}} = \bigoplus_{n \geq 0} {\rm Sym}^n(M\oplus (M\otimes L))$. Since $\cal O_Y(D)\vert_E = M^{*}$, if we denote
\[
\cal A_{m,n} \coloneqq H^0(E, {\rm Sym}^n(M\oplus (M\otimes L))\otimes M^{-m})
\]
then $\mathcal A$ is bi-graded, and
\[
\cal A \simeq \bigoplus_{m,n\ge 0} \mathcal A_{m,n}.
\]
We have
\[ 
\mathcal A_{m,n}\simeq \bigoplus_{k=0}^n H^0(E, M^{n-m}\otimes L^k).
\]
Note that $\cal A_{m, n} = 0$ for $n < m$ and $\cal A_{m, n} \simeq H^0(E, \cal O_E)$
for $n = m$.

Assume by contradiction that $\mathcal A$ is finitely generated, and let
$\sigma_1,\ldots,\sigma_t$ be homogeneous generators. For each non-zero
homogeneous component of a generator lying in
$H^0(E,M^a\otimes L^b)$ with $a>0$, consider the ratio $b/a$, and let
$\ell$ be the maximum of these finitely many ratios. Since there are no
non-zero sections of $L^b$ for $b>0$, every non-zero homogeneous element
with $b>0$ has $a>0$. Hence every element generated by the $\sigma_i$ has
slope $b/a\leq \ell$. On the other hand, for any $a'>0$ and $b'>\ell a'$, the line bundle
$M^{a'}\otimes L^{b'}$ is ample, and if $a' \gg 0$ we have
\(
H^0(E,M^{a'}\otimes L^{b'})\neq 0.
\)
This gives a non-zero element of $\mathcal A$ of slope $b'/a'>\ell$, a contradiction.

Thus, we have proven:

\begin{theorem}\label{t_flopsdonotexist}
There exist
\begin{enumerate}
\item  a smooth threefold $Y$, 
\item a rank one foliation $\cal H$  on $Y$   with canonical singularities,
\item  a flopping contraction $f\colon Y\to Z$ for $\cal H$, and 
\item a divisor $D$ on $Y$
\end{enumerate}
such that $-D$ is $f$-ample but  the flopping contraction $f\colon Y\to Z$ does not admit a $D$-flop. 
\end{theorem}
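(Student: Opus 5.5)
The proof assembles the construction of \S\ref{s_unfloppable}; the one point still to be justified is that finite generation of $\cal A$ is equivalent to existence of the $D$-flop. We take $Y=\bb V(\cal N_{\cal F})$ over the Hilbert modular surface $X$ of \S\ref{s_setup}, with the foliation $\cal H$ given by the $2$-forms $\Omega_i=z_i^2d(\omega_i/z_i)$. Canonicity of $\cal H$ comes from inversion of adjunction along the invariant zero section $S\simeq X$, where $\cal H|_S=\cal F$ is canonical. The contraction $f\colon Y\to Z$ of the nodal curve $E$ exists by Artin, since $\cal N_{E/Y}$ is anti-ample. After passing to an \'etale neighbourhood of $f(E)$, $f$ is a flopping contraction: $K_{\cal H}=p^*K_{\cal F}$ and $K_{\cal F}\cdot E=0$. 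Finally $D=p^*E$ satisfies $D|_E=\cal O_X(E)|_E=M^*$, which is anti-ample, so $-D$ is $f$-ample.

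To link flops and finite generation, suppose a $D$-flop $\phi\colon Y\dashrightarrow Y^+$ over $Z$ exists. Since $\phi$ is small and $Y$ is normal, $f_*\cal O_Y(mD)=f^+_*\cal O_{Y^+}(mD^+)$ for all $m$. Because $D^+$ is $f^+$-ample and $\mathbb Q$-Cartier, the algebra $\bigoplus_m f^+_*\cal O_{Y^+}(mD^+)$ is a finitely generated $\cal O_Z$-algebra; a Veronese subalgebra can be reduced to a Cartier multiple, and finite generation of the full algebra follows from that of the Veronese. Completion along $f(E)$ is flat, so $\cal A$ is finitely generated over $\cal O_{\widehat Z}$. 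It therefore suffices to show that $\cal A$ is not finitely generated.

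For this we use the computation of \S\ref{s_unfloppable}. The splitting of \eqref{ses_egl} holds because
\[
h^1(E,{\rm Sym}^n(M\oplus(M\otimes L)))=h^0(E,{\rm Sym}^n(\cdots)^*)=0,
\]
by Serre duality with $\omega_E\simeq\cal O_E$ and ampleness of $M$ and $M\otimes L$. This splitting gives the bigraded decomposition
\[
\cal A=\bigoplus_{m,n}\bigoplus_{k}H^0(E,M^{n-m}\otimes L^k).
\]
This step is where care is needed: to pass from the formal completion to the graded pieces, one has to check that the isomorphisms are compatible with the multiplication, so that the algebra structure is really the bigraded one.

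Then comes the slope argument. Each homogeneous component lies in some $H^0(E,M^a\otimes L^b)$, and nonzero elements with $b>0$ force $a>0$ because $L$ has degree $0$ and is non-torsion. Since $M$ and $L$ multiply, the slope $b/a$ of any product is bounded by the maximum slope of the factors. Hence finitely many generators bound all slopes by some $\ell$. But for $b'>\ell a'$ with $a'\gg0$, the bundle $M^{a'}\otimes L^{b'}$ has positive degree on the nodal genus-one curve $E$, so it has sections by Riemann--Roch. This produces elements of slope exceeding $\ell$, a contradiction. Hence $\cal A$ is not finitely generated and no $D$-flop exists. The main obstacle is the bookkeeping that $n\ge m$ together with the index $k\le n$ still leaves room to realise pairs with $b'/a'$ arbitrarily large. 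For this we take $k=n$ and $a'=n-m$ small relative to $n$, with $a'$ still large enough to guarantee sections.
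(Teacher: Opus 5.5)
Your route is the paper's: the same $Y=\bb V(\cal N_{\cal F})$, foliation $\cal H$, Artin contraction and divisor $D=p^*E$, the same $H^1$-vanishing for \eqref{ses_egl}, and the same slope argument. Your argument that a $D$-flop would make $\cal A$ finitely generated is a correct addition to what the paper only asserts; it uses smallness of $\phi$, the Veronese reduction, and (this is the real input, rather than flatness) the theorem on formal functions to identify $\cal A$ with the completion of $\bigoplus_m f_*\cal O_Y(mD)$.

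However, the step you flag and leave unchecked is a genuine gap, and it cannot be closed in the form you state. $\cal O_{\widehat Y}$ is \emph{not} isomorphic as a sheaf of rings to $\prod_n{\rm Sym}^n(M\oplus(M\otimes L))$. At the node of $E$ the latter has complete local ring $\bb C[[x,y,u,v]]/(xy)$, while $Y$ is smooth there. The $H^1$-vanishing only controls the $E$-adic filtration $F^\bullet$ on $\cal A_m$: it gives $F^0\cal A_m=F^m\cal A_m$ and $F^n\cal A_m/F^{n+1}\cal A_m\simeq\bigoplus_{k\le n} H^0(E,M^{n-m}\otimes L^k)$ for $n\ge m$. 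So your bigraded ring is ${\rm gr}\,\cal A$, not $\cal A$. Finite generation does not pass formally from $\cal A$ to ${\rm gr}\,\cal A$. Nor can you simply define a slope via leading terms of arbitrary elements, since such a slope is not stable under sums: leading terms can cancel and expose uncontrolled higher-order terms. The paper's phrase ``the sequence splits, so $\cal O_{\widehat Y}=\bigoplus_n{\rm Sym}^n(\cdots)$'' must likewise be read in this filtered sense.

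The fix is to use an honest grading in $k$ and leading terms only in the remaining direction.
\begin{itemize}
\item Scaling the fibres of $p$ gives $\cal O_{\widehat Y}=\prod_{k\ge 0}\widehat{\cal N_{\cal F}^{-k}}$, with completion along $E\subset X$. Hence $\cal A_m=\prod_k\cal A_{m,k}$ with $\cal A_{m,k}=H^0(\widehat X,\cal N_{\cal F}^{-k}(mE))$, and multiplication respects the bidegree $(m,k)$.
\item For $0\neq\sigma\in\cal A_{m,k}$ vanishing to order exactly $i$ along $E$, put $a(\sigma)=i+k-m$. Its leading term is a nonzero element of $H^0(E,M^{a}\otimes L^{k})$, so $a\ge 0$, and $a\ge1$ when $k>0$.
\item Because $E$ is integral, leading terms of products are nonzero, so $a$ and $k$ are additive on products of bihomogeneous elements. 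For sums inside a fixed $\cal A_{m,k}$, the order can only go up.
\item Consequently, every bihomogeneous component of anything in the $\cal A_0$-subalgebra generated by finitely many elements has slope $k/a\le\ell$ for some $\ell$. Indeed, every element of $\cal A_m$ has all component slopes at most $m+1$, and these bounds persist under products and sums.
\item Conversely, take $a'\ge 1$, $k>\max(\ell,1)\,a'$ and $m=k-a'$. The vanishing of $H^1(E,M^{a'+i}\otimes L^k)$ for $i\ge1$ lifts a nonzero section of $M^{a'}\otimes L^k$ to an element of $\cal A_{m,k}$ of order $0$ along $E$, hence of slope $k/a'>\ell$.
\end{itemize}
With this replacement for the ``bigraded algebra'' step, your argument is complete.
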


\subsection{A rank one foliation without a canonical model}
\label{s_no_canonical_model}
We use the same set-up as in \S \ref{s_setup}.
Set $\cal N \coloneqq \cal O_X(-K_{\cal G} )$ so that we have an exact sequence 
\[
0 \rightarrow \cal N^* \rightarrow \Omega^1_X(\log E)\rightarrow \mathcal O_X( K_{\cal F}) \rightarrow 0.
\]
Let $P \coloneqq \mathbb P(\mathcal N\oplus \cal O_X)$ with projection $p\colon P \rightarrow X$ and let $X_0$ and $X_\infty$ be the corresponding sections
so that the normal sheaves of $X_0$ and $X_{\infty}$ in $P$ are given by 
$\cal N_{X_0/P} =\cal N$ and $\cal N_{X_\infty/P} =\cal N^*$ respectively.

Let $V \coloneqq P \setminus X_{\infty}$. We proceed similarly as in the previous section to construct a foliation on $V$. 
Let $\{U_i\}$ be an open cover of $X$ so that $\cal F$ is defined by a logarithmic  $1$-form $\omega_i$ on $U_i$
with poles along $E$. Note that $\{\omega_i\}$ gives an element of $H^0(X, \Omega^1_X(\log E)\otimes \cal N)$.
We have $p^{-1}(U_i) \cap V \simeq U_i \times \bb C$ and we denote by $z_i$ the corresponding coordinate on $\bb C$.
Let $h_{ij}$ be the transition function of the line bundle associated to $\cal N$, with respect to the open cover $\{U_i\}$, so that $\omega_{i} = h_{ij}\omega_j$  and  $z_i = h_{ij}z_j$, for all $i,j$. 

Fix $N\ge 2$. We want to define a collection of logarithmic 2-forms $\Omega_i$ on $p^{-1}(U_i)\cap V$  
which satisfy $\Omega_i = h_{ij}^2\Omega_j$, which have poles along $p^{-1}(E)$ and which define a rank one foliation on $P$ which is tangent to the fibration $P \to X$ along the divisor $X_{\infty}$ and the order of this tangency is $N-2$.

Fix an integer $n\ge 2$. By Lemma  \ref{lem_hmf_calcs}, there exists a non-zero section of $H^0(X,\Omega^1_X(\log E) \otimes (\mathcal{N}^*)^{\otimes n-1})$.
This defines a logarithmic 1-form $\eta_{in}$ on $U_i$ with poles on $E$ and such that 
\[\eta_{jn} = h_{ij}^{n-1}\eta_{in}.\]

On $p^{-1}(U_i)\cap V$ we define
\[\Omega_i \coloneqq z_i^2d\left(\frac{\omega_i}{z_i}\right) + \sum_{n=2}^N z_i^n \omega_i \wedge \eta_{in}.\]

It follows by a direct computation that $\Omega_i=h_{ij}^2\Omega_j$ and that the $2$-forms $\Omega_i$ define a rank one foliation on $V$, which extends to a foliation $\cal H$ on $P$.

Let $x \in X$ be a general point, and let $F$ be the fibre over $x$.  Let $U = U_i$ be a neighbourhood
of $x$ as defined above and let $t_i = \frac{1}{z_i}$.
In a neighbourhood of $F \cap X_{\infty}$, we have that
$\cal H$ is generated by \[ -t_i^{N-2}\omega_i\wedge dt_i+t_i^{N-1}d\omega_i+\sum_{n=2}^N t_i^{N-n} \omega_i\wedge \eta_{in}.\]

Since there are no $K_{\cal H}$-negative invariant rational
curves, \cite[Theorem 4.8]{CS23} implies that $K_{\cal H}$ is nef. Thus,
the foliation  $\cal H$ satisfies the following properties:
\begin{enumerate}
\item $X_0$ is $\cal H$-invariant and $\cal H$ restricted to $X_0$ coincides with $\cal F$;

\item $\cal H$ is everywhere transverse to $F$ except at $F \cap X_{\infty}$ where it acquires a tangency and, 
in particular, $K_{\cal H}\cdot F = N-2$; 

\item $\cal H$ is generically transverse to $X_\infty$;

\item \(
K_{\mathcal H}\sim p^*K_{\mathcal F}+(N-2)X_\infty
\)

 is nef and big for any $N\ge 3$. 
\end{enumerate}

In particular, we have that $K_{\cal H}|_{X_0}\simeq K_{\cal F}$ is not big and, therefore $X_0$ is contained in  $\Null K_{\cal H}$.
Observe that there is no morphism $f\colon X_0 \rightarrow \Sigma$ where $\dim \Sigma \leq 1$
so that $\cal N^*_{X_0/P} = \cal N^*_{\cal F}(E)$ is $f$-ample.  In particular, there does not exist a birational morphism 
$c\colon P\to Z$ onto an algebraic space $Z$  whose exceptional locus contains $X_0$.

To summarise, we have proven:

\begin{theorem}\label{t_canonicalmodelsdonotexist}
There exist
\begin{enumerate}
\item a smooth projective threefold $P$, and
\item a rank one foliation $\cal H$ on $P$ with canonical singularities
\end{enumerate}
such that $K_{\cal H}$ is big and nef, but $\cal H$ does not admit a canonical model, in the sense that there does not exist a morphism
\(
c\colon P\to Z
\)
in the category of algebraic spaces such that $c$ contracts $\Null K_{\cal H}$ to a point or a curve.
\end{theorem}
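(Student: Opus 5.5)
The plan is to assemble Theorem \ref{t_canonicalmodelsdonotexist} from the construction of \S\ref{s_no_canonical_model} with $N\ge 3$ fixed. I will verify four things in order: that $P$ is smooth projective, that $\mathcal H$ has canonical singularities, that $K_{\mathcal H}$ is nef and big, and that no contraction of $\Null K_{\mathcal H}$ exists. Smoothness and projectivity are immediate, since $P=\mathbb P(\mathcal N\oplus\mathcal O_X)$ is a $\mathbb P^1$-bundle over the smooth projective surface $X$.

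For canonicity I will argue locally and split $P$ into three regions. Near $X_0$ I reuse the argument of \S\ref{s_unfloppable}. The first term of $\Omega_i$ is the form already considered there. The additional terms $z_i^n\omega_i\wedge\eta_{in}$ with $n\ge 2$ vanish to order at least $2$ along $X_0=\{z_i=0\}$. Hence $X_0$ is $\mathcal H$-invariant and $\mathcal H|_{X_0}=\mathcal F$. Since $\mathcal F$ is canonical, inversion of adjunction \cite[Theorem 4.12]{SSV25} together with \cite[Fact III.i.3]{MP13} gives that $\mathcal H$ is canonical near $X_0$.

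Near $X_\infty$ I use the local generator in the coordinate $t_i$. Its term with $n=N$ is $\omega_i\wedge\eta_{iN}$, which does not vanish along $t_i=0$ at a general point. Away from a locus of codimension $\ge 2$, the foliation is therefore regular there, generically transverse to $X_\infty$, and tangent to the fibres to order $N-2$ along $X_\infty$. On $p^{-1}(E)$ and at the finitely many points where $\omega_i\wedge\eta_{iN}$ degenerates, I expect to need an explicit local check that the singularities are at worst canonical. I would do this by writing the generating vector field and checking, following \cite{MP13}, that its linear part is non-nilpotent or that it is a reduced singularity.

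Nefness follows from the cone theorem \cite[Theorem 4.8]{CS23}. The fibres $F$ satisfy $K_{\mathcal H}\cdot F=N-2>0$. Invariant curves in $X_0$ satisfy $K_{\mathcal H}\cdot C=K_{\mathcal F}\cdot C\ge 0$. Any other $\mathcal H$-invariant rational curve is a leaf transverse to the fibres, so it maps finitely onto its image in $X$, and its $K_{\mathcal H}$-degree is bounded below using $K_{\mathcal H}\sim p^*K_{\mathcal F}+(N-2)X_\infty$.

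Bigness comes from a top self-intersection computation. Write $\xi=X_\infty$ and use $\xi^2=-p^*c_1(\mathcal N)\cdot\xi$, which gives $\xi^3=K_{\mathcal G}^2$ up to sign. Expanding $(p^*K_{\mathcal F}+(N-2)\xi)^3$, the cross terms involve $K_{\mathcal F}^2=0$, $K_{\mathcal F}\cdot K_{\mathcal G}>0$ and $K_{\mathcal G}^2=0$. This should yield a positive multiple of $K_{\mathcal F}\cdot K_{\mathcal G}$, and since $K_{\mathcal H}$ is nef, bigness follows.

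The non-existence of a contraction is where I expect the real difficulty. Since $K_{\mathcal H}|_{X_0}=K_{\mathcal F}$ has numerical dimension one, $X_0\subset\Null K_{\mathcal H}$. Suppose $c\colon P\to Z$ is a morphism of algebraic spaces that contracts $X_0$ to a point or a curve $\Sigma$. I would then apply the negativity lemma, or Grauert–Artin type criteria, to the exceptional divisor $X_0$, which forces $-X_0|_{X_0}$ to be $c|_{X_0}$-ample. But $-X_0|_{X_0}=\mathcal N^*=\mathcal O_X(K_{\mathcal G})$ has vanishing self-intersection and no sections. If $\Sigma$ is a point, ampleness would give $K_{\mathcal G}^2>0$, a contradiction. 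If $\Sigma$ is a curve, $X_0\to\Sigma$ would be a fibration with $K_{\mathcal G}$ relatively ample. This should contradict the nonexistence of any fibration of the Hilbert modular surface over a curve, since the tautological foliations have no algebraic leaves and $K_{\mathcal G}$ is non-abundant. Making this rigorous for algebraic spaces, rather than projective $Z$, is the step I expect to be the main obstacle.
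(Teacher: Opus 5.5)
The architecture matches the paper: the construction of \S\ref{s_no_canonical_model}, nefness, $X_0\subset\operatorname{Null}K_{\mathcal H}$ because $K_{\mathcal H}|_{X_0}\sim K_{\mathcal F}$ is not big, and then the obstruction that $\mathcal N^*_{X_0/P}\cong\mathcal O_X(K_{\mathcal G})$ cannot be relatively ample for a map of $X_0$ to a point or a curve. The paper states that last claim only as a one-line observation. Your computation $K_{\mathcal H}^3=3(N-2)^2K_{\mathcal F}\cdot K_{\mathcal G}>0$ is correct and more explicit than the paper.

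\textbf{The main gap is the final step, and it is genuine.} First, neither the negativity lemma nor Grauert--Artin shows that contracting $X_0$ forces $-X_0|_{X_0}$ to be relatively ample. Negativity gives at best $X_0\cdot C<0$ on a covering family of contracted curves, and Artin's criterion is only sufficient. Second, and more seriously, in the curve case relative ampleness is no obstruction at all. The Poincar\'e metric on $K_{\mathcal G}$ has curvature positive transversally to $\mathcal F$, and $E$ is the only compact $\mathcal F$-invariant curve, so $K_{\mathcal G}\cdot C>0$ for every curve $C\not\subset E$. Hence $K_{\mathcal G}$ is relatively ample for \emph{any} fibration of $X$ over a curve in which $E$ is horizontal. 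The absence of algebraic leaves and the non-abundance of $K_{\mathcal G}$ say nothing about whether such fibrations exist; many Hilbert modular surfaces are rational or elliptic. So your claimed contradiction is unsupported.

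A way out is to use what a canonical model is: $c$ may only contract $K_{\mathcal H}$-trivial curves. If $c$ sends $X_0$ to a point or a curve, then $X_0$ is swept out by contracted curves $C$ with $K_{\mathcal F}\cdot C=K_{\mathcal H}\cdot C=0$. But the same metric argument gives $K_{\mathcal F}\cdot C>0$ for every $C\not\subset E$, a contradiction. This needs neither the conormal bundle nor any statement about fibrations of $X$.

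\textbf{Your canonicity check near $X_\infty$ misidentifies the singular locus.} It is not finitely many points but the whole curve $X_\infty\cap p^{-1}(D_N)$, where $D_N$ is the zero divisor of $\omega\wedge\eta_N\in H^0(X,\mathcal O_X(K_{\mathcal F}+(N-1)K_{\mathcal G}))$. For $N\ge 3$ this divisor contains $E$, because $(K_{\mathcal F}+(N-1)K_{\mathcal G})|_E\cong(N-2)K_{\mathcal G}|_E$ is non-torsion of degree $0$.

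Near a smooth point of $E=\{y=0\}$, write $\omega=g\,dy/y$ and $\omega\wedge\eta_n=-ga_n\,dx\wedge dy/y$. Then the generator is $-gt^{N-2}\partial_x+(t^{N-1}g_x-g\sum_n t^{N-n}a_n)\partial_t$. Along $X_\infty\cap p^{-1}(E)$ its linear part is nilpotent as soon as $N\ge 4$, since then $a_{N-1}$ also vanishes on $E$. So for $N\ge 4$, $\mathcal H$ is not even log canonical there. For $N=3$ the eigenvalues are $0,0,-ga_2$.

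So fixing an arbitrary $N\ge 3$ is not enough. You need $N=3$ and $\eta_2$ chosen with $\omega\wedge\eta_2$ nowhere zero on $E$; this is possible since $H^0(K_X+E)\to H^0(\omega_E)$ is onto. Even then, the rest of the singular curve and the points over the node of $E$ still have to be checked. The paper asserts canonicity without this analysis.

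Relatedly, prove nefness directly rather than through the cone theorem, whose use presupposes these singularities. $p^*K_{\mathcal F}$ is nef, and $X_\infty$ is nef because $X_\infty|_{X_\infty}\cong\mathcal O_X(K_{\mathcal G})$ is nef.
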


\bibliography{math.bib}
\bibliographystyle{alpha}

\end{document}